\documentclass[12pt]{amsart}
\linespread{1.2}

\usepackage{amsmath, amssymb, amstext, amsthm, eqnarray, amscd}
\usepackage{tikz}
\usepackage{tikz-cd}
\usetikzlibrary{%
  matrix,%
  calc,%
  arrows%
}
\usepackage{fullpage}

\usepackage{float}
\usepackage{mathtools}
\usepackage{mathabx}
\usepackage{graphicx}
\usepackage[mathscr]{euscript}
\usepackage{enumerate}
\usepackage{hyperref}
\usepackage{pgfkeys}
\usepackage{fullpage}
\usepackage{pst-node}
\usepackage{lastpage}
\usepackage{fancyhdr}
\usepackage{multirow}
\allowdisplaybreaks
\usepackage{graphicx}
\usepackage{tikz,color,soul}
\usepackage{chngcntr}
\usepackage{stmaryrd}
\usepackage{tabularx}

\usepackage{enumitem}
\usepackage{kantlipsum}
\usepackage{array}
\usepackage{cleveref}
\usepackage[english]{babel}
\usepackage{enumitem}
\def\multiset#1#2{\ensuremath{\left(\kern-.3em\left(\genfrac{}{}{0pt}{}{#1}{#2}\right)\kern-.3em\right)}}

\def\lcm{\mathrm{lcm}}

\def\F{\mathcal F}

\def\h{\mathrm{ht}}

\def\reesI{\mathcal{R}(\mathcal{I})}

\newtheorem{theorem}{Theorem}[section]

\newtheorem{cor}[theorem]{Corollary}
\theoremstyle{definition}
\newtheorem{definition}[theorem]{Definition}
\newtheorem{eg}[theorem]{Example}
\newtheorem{remark}[theorem]{Remark}

\numberwithin{equation}{section}


\def\lcm{\mathrm{lcm}}

\def\F{\mathcal F}

\def\h{\mathrm{ht}}

\def\defect{\mathrm{def}}

\def\filJ{\mathcal{J}}


\newcommand{\nothing}[1][]{%
    \tikz[overlay,remember picture]{
    \draw[red]
      ($(l4)+(-0.7em,0.7em)$) rectangle
      ($(r4)+(0.5em,-0.5em)$);}
}


\title{Defect Functions Between Filtrations of Ideals}

\author{Arindam Banerjee}
    \address{Department of Mathematics, Indian Institute of Technology Kharagpur, Kharagpur, INDIA - 721302.}
    \email{123.arindam@gmail.com}

\author{T\`ai Huy H\`a }
    \address{Department of Mathematics,Tulane University, 6823 St. Charles Ave., New Orleans,
LA 70118}
    \email{tha@tulane.edu}

\author{Vivek Bhabani Lama}
    \address{Department of Mathematics, Indian Institute of Technology Kharagpur, Kharagpur, INDIA - 721302.}
    \email{ v.bhabani.lama@gmail.com, vivekbhabanilama@kgpian.iitkgp.ac.in}

\keywords{Hilbert functions, Symbolic defect, homogeneous monimial ideals, quasi-polynomial}
\subjclass[2020]{13A02, 13A15} 

\date{}


\makeindex
\begin{document}
\begin{abstract}
We introduce and study the \emph{defect function} associated to a pair of filtrations of ideals, which generalizes the symbolic defect of ideals. Under the assumption that the Rees algebra of one filtration is Noetherian and that a natural graded module measuring the interaction between the filtrations is finitely generated over it, we show that the corresponding defect function is asymptotically a quasi-polynomial. Moreover, the defect function becomes eventually polynomial when the Rees algebra of the first filtration is standard graded.
For filtrations arising from saturations and ordinary powers of monomial ideals, we further analyze the structure of the quasi-polynomial. We prove that the top two coefficients of the eventual quasi-polynomial are constant under natural hypotheses.
\end{abstract}
\maketitle


\section{Introduction}\label{section intro}

The comparison between symbolic and ordinary powers of ideals has long been a central theme in commutative algebra and algebraic geometry. Symbolic powers encode geometric vanishing conditions, while ordinary powers reflect algebraic multiplicities, and understanding the nature of their discrepancy forms a key part of the study of blowups, Rees algebras, and asymptotic invariants (cf. \cite{BH,DHST, ELS,GHVT,HH}). A particularly useful measure of this discrepancy is the \emph{symbolic defect}, introduced in \cite{galetto2019symbolic}, which records the minimal number of generators of the module $I^{(m)}/I^m$ for a given ideal $I$. This invariant has been studied in various contexts, including monomial ideals, ideals of fat point schemes, and ideals defining singular varieties, and it plays an important role in the investigation of containment problems and asymptotic phenomena (cf.~\cite{BD,GHM,BO}, among others).

Symbolic defect fits naturally into a broader picture. Many constructions in commutative algebra  such as symbolic powers, ordinary powers, integral closures, saturations, colon filtrations, tight closure sequences, and mixed power sequences --- give rise to filtrations of ideals. It is natural to compare two such filtrations and measure their discrepancy. Motivated by this viewpoint, we introduce the \emph{defect function} for pairs of filtrations of ideals. Given filtrations $\mathcal{I} = \{I_n\}_{n \ge 0}$ and $\filJ = \{J_n\}_{n \ge 0}$ of ideals in a ring $R$, which is either a Noetherian local ring or a standard graded algebra over a field, we define
\[
\defect(\mathcal{I},\filJ,n)
= \mu\!\left( \frac{I_n + J_n}{J_n} \right),
\]
which recovers symbolic defect when $\mathcal{I}$ is the symbolic power filtration and $\filJ$ is the ordinary power filtration (see \Cref{definition of defect}). This framework captures a wide range of natural examples and allows a unified treatment of several defect-type invariants.

A second major theme in commutative algebra is the study of \emph{asymptotic behavior} of algebraic invariants. Prior work, including \cite{cutkosky1999asymptotic, DHHT, Kodiyalam, NguyenTrung, TW} and references therein, show that invariants such as Castelnuovo--Mumford regularity, Hilbert functions, and depth often stabilize or become eventually linear or quasi-polynomial. Recent work has extended these asymptotic phenomena to invariants such as the $v$-number \cite{conca2024note,FG2025}, Waldschmidt constants, and asymptotic resurgence \cite{BH,DFMS, HKNN}. In the context of symbolic defect, Drabkin and Guerrieri \cite{BD} proved that $\mathrm{sdef}(I,n)$ is eventually a quasi-polynomial provided the symbolic Rees algebra of $I$ is Noetherian. Their proof relies on Hilbert--Serre theory applied to a finitely generated graded module over the symbolic Rees algebra.

Our first main result generalizes the theorem of Drabkin--Guerrieri to the setting of arbitrary filtrations. 
\medskip
\\ 
\noindent\textbf{Theorem~\ref{main theorem 1}.}
\emph{Let $\mathcal{I}=\{I_i\}_{i \geq 0}$ and ${\mathcal{J}}=\{J_{i}\}_{i \geq 0}$ be (homogeneous) filtrations of ideals in $R$. Assume that the Rees algebra $\mathcal{R}(\mathcal{I})$ is a Noetherian and the defect module 
$$N=\displaystyle \bigoplus_{n \geq 0} \dfrac{I_n+J_n}{J_n}$$  
is a finitely generated module over $\mathcal{R}(\mathcal{I})$. Then, $\defect(\mathcal{I},\filJ,n)$ is eventually a quasi-polynomial in $n$ with period $\alpha=\mathrm{lcm}(\alpha_{1},\alpha_{2},\dots,\alpha_{s})$, where $\alpha_1,\alpha_2,\dots,\alpha_s$ are the degrees of the generators of $\reesI$. Finally, if $\mathcal{R}(\mathcal{I})$ is standard graded as an algebra over $R$, then $\defect({\mathcal{I},{\mathcal{J}} ,n)}$ is eventually a polynomial in $n$.}

\medskip

Theorem \ref{main theorem 1} can be generalized by replacing $N_n$ with $\mathbf{F}(N_n)$, where $\mathbf{F}$ is a functor on the category of finitely generated $R$-modules which commutes with direct sums, and considering $\mu(\mathbf{F}(N_n))$ instead of $\mu(N_n)$; see \Cref{Theorem on Functors}. Particularly, \Cref{cor.TorExt} shows that for any finitely generated $R$-module $M$ and any fixed $i \ge 0$, $\mu(\text{Tor}^R_i(M,N_n))$ is asymptotically quasi-polynomial in $n$. 


In the case of monomial ideals, further structure emerges from the geometry of exponent sets and Newton polyhedra (cf.~\cite{BG2009,D+, DO2025, HerzogHibi, BO, HoaTrung}). For filtrations of monomial ideals, in Theorem \ref{growth of sdef vs idef}, we provide sufficient conditions ensuring that the growth of the defect function for $(\mathcal{I},\filJ)$ is comparable to that of the defect function for their integral closures. This generalizes \cite[Proposition~4.5]{BO} and extends the philosophy that monomial filtrations inherit linear or quasi-linear asymptotic behavior from the combinatorics of their exponent sets.

Our second main theorem concerns the defect between the saturation filtration of a monomial ideal $I$ with respect to another monomial ideal $J$ and the filtration of ordinary powers of $I$. For this particular pair of filtrations, we determine the top two coefficients of the eventual quasi-polynomial describing the defect.

\medskip
\noindent\textbf{Theorem~\ref{theorem on coefficients}.}
\emph{Let $R=\mathbb{K}[x_1,\dots,x_r]$ be a standard graded polynomial ring. Let $I$ and $J$ be  monomial ideals. Consider two filtrations $\mathcal{I}=\{I^n: J^\infty\}_{n \geq 0}$ and $\filJ=\{I^n\}_{n \geq0}$ of ideals in $R$. Assume that the Rees algebra $\mathcal{R}(\mathcal{I})$ is Noetherian and the defect module 
$$N=\displaystyle \bigoplus_{n \geq 0} \dfrac{I_n+J_n}{J_n}$$  
is a finitely generated module over $\mathcal{R}(\mathcal{I})$. Suppose further that the quasi-polynomial (by \Cref{main theorem 1}) defect function of $(\mathcal{I},\filJ)$ is of the form 
$$\defect(\mathcal{I},\filJ,n) = a_c(n)n^c+a_{c-1}(n)n^{c-1}+ \text{ lower order terms},$$ 
for $n\gg 0$, where $c$ is the degree. Then, the following statements hold:
  \begin{enumerate}
      \item The coefficient $a_c(n)$ is constant.
      \item If $I$ is generated by monomials in the same degree and $\h(I) \geq 2$, then the non-zero values of $a_{c-1}(n)$ are constant.   
  \end{enumerate} }

\medskip

\noindent
\textbf{Organization of the paper.}
Section~\ref{section prelims} collects preliminary material on filtrations, Rees algebras, Hilbert functions and symbolic defect. In Section~\ref{section 3}, we prove Theorem~\ref{main theorem 1}, establish quasi-polynomial behavior of defect functions. Section~4 turns to monomial filtrations: we compare defect functions with their integral-closure analogues and prove Theorem~\ref{theorem on coefficients} concerning the leading coefficients of saturation defect for monomial ideals.


\section{Preliminaries}\label{section prelims}
In this section, we introduce various notations and auxiliary results that will be used. Throught the paper, let $\mathbb{K}$  be a field and let $R$ be either a Noetherian local ring with maximal ideal $\mathfrak{m}$ or a standard graded $\mathbb{K}$-algebra with maximal homogeneous ideal $\mathfrak{m}$. 

\subsection{Filtration of Ideals} We shall recall the definition of a filtration of ideals, and discuss various rings and algebras that are associated to filtrations. 

\begin{definition}
 A collection $\mathcal{I}=\{I_i\}_{i \in \mathbb{N}}$ of ideals in $R$ is called a \emph{filtration} if the following conditions hold:
 \begin{enumerate}
     \item it is a \emph{graded family} of ideals, i.e., $I_0 = R$ and, for every $m,n \in \mathbb{N}$, $I_mI_n \subseteq I_{m+n}$; and 
     \item for any $n \in \mathbb{N}$, we have $I_{n+1} \subseteq I_n.$
 \end{enumerate}
\end{definition}

Given a filtration $\mathcal{I}=\{I_i\}_{i \in \mathbb{N}}$ of ideals in $R$, its \emph{Rees algebra} is defined by 
$$\reesI=\displaystyle\bigoplus _{n=0}^{\infty}I_nt^n \subseteq R[t],$$ 
and its \emph{special fiber ring} is given by 
$$\mathcal{F}(\mathcal{I})= \reesI \otimes_R \frac{R}{\mathfrak{m}}.$$

A filtration $\mathcal{I} = \{I_i\}_{i \in \mathbb{N}}$ in a graded ring $R$ is called \emph{homogeneous} if every $I_i$ is a homogeneous ideal. When $R$ is a polynomial ring, we call $\mathcal{I}$ a \emph{monomial filtration} if each $I_i$ is a monomial ideal.




\subsection{Hilbert Functions and Asymptotic Behaviour} We now define a quasi-polynomial and recall the Hilbert-Serre theorem in the theory of Hilbert function.

\begin{definition}
    A \emph{quasi-polynomial} function in $\mathbb{N}$ is a function $f: \mathbb{N}\rightarrow \mathbb{Q}$ such that for some $p \in \mathbb{N}$, there exist polynomials $f_0,f_1,\dots,f_{n-1}$ with rational coefficients having the property that for all $m \in \mathbb{N}$, $f(m)=f_r(m)$ whenever $m \equiv r \ (\text{mod } p).$ The value of $p$ is called the \emph{period} of $f$.
\end{definition}

\begin{theorem}[{\cite[4.4.1]{stanley2012enumerative}}]\label[theorem]{quasi-poly}
 If a numerical function $\phi: \mathbb{N}\rightarrow \mathbb{Z}$ has the generating function
 $$\sum_{n=0}^{\infty}\phi(n)t^n=\dfrac{q(t)}{\displaystyle \prod_{i=1}^s(1-t^{d_i})}$$
 for some $d_1,d_2,\dots,d_s \in \mathbb{N}$, and some polynomial $q(t)\in \mathbb{Z}[t]$, then $\phi(n)=Q(n)$ for sufficiently large $n$, where $Q(n)$ is a quasi-polynomial with period $p=\lcm(d_1,d_2,\dots,d_s)$. 
\end{theorem}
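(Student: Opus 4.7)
My plan is to prove the theorem by means of a partial fraction decomposition of the rational function
\[
F(t)=\frac{q(t)}{\prod_{i=1}^{s}(1-t^{d_{i}})}
\]
over the complex numbers, and then read off its Taylor coefficients term by term. Let $D=\lcm(d_{1},\dots,d_{s})$. Since each $d_{i}$ divides $D$ and
\[
1-t^{d}=\prod_{\zeta^{d}=1}(1-\zeta t),
\]
every linear factor that appears in the denominator of $F(t)$ has the form $1-\zeta t$ with $\zeta$ a $D$-th root of unity. Grouping these linear factors, the denominator can be rewritten as $\prod_{\zeta^{D}=1}(1-\zeta t)^{m_{\zeta}}$ for suitable nonnegative integers $m_{\zeta}$.

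Next, I would apply the standard partial fraction decomposition to write
\[
F(t)=P(t)+\sum_{\zeta^{D}=1}\sum_{k=1}^{m_{\zeta}}\frac{c_{\zeta,k}}{(1-\zeta t)^{k}},
\]
with $P(t)\in\mathbb{C}[t]$ a polynomial (present only when $\deg q$ exceeds the degree of the denominator) and $c_{\zeta,k}\in\mathbb{C}$. Expanding each summand via
\[
\frac{1}{(1-\zeta t)^{k}}=\sum_{n\ge 0}\binom{n+k-1}{k-1}\zeta^{n}t^{n}
\]
and comparing coefficients of $t^{n}$ yields, for every $n>\deg P$,
\[
\phi(n)=\sum_{\zeta^{D}=1}\sum_{k=1}^{m_{\zeta}}c_{\zeta,k}\binom{n+k-1}{k-1}\zeta^{n}.
\]

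To finish, I would partition the indices $n$ according to their residue modulo $D$. Fix $r\in\{0,1,\dots,D-1\}$ and restrict to $n\equiv r\pmod{D}$; then each $\zeta^{n}$ reduces to $\zeta^{r}$ and is independent of $n$, so the expression above collapses to a polynomial $Q_{r}(n)\in\mathbb{C}[n]$. Because $\phi(n)\in\mathbb{Z}$ while $Q_{r}$ agrees with $\phi$ on the infinite arithmetic progression $\{r,r+D,r+2D,\dots\}\cap(\deg P,\infty)$, Lagrange interpolation at sufficiently many of these points forces $Q_{r}\in\mathbb{Q}[n]$. This exhibits $\phi$ as eventually quasi-polynomial with period dividing $D$.

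The main obstacle I anticipate is bookkeeping rather than conceptual: carrying out the partial fraction decomposition cleanly and regrouping the summation residue class by residue class. A secondary subtlety is securing the rationality of the coefficients of each $Q_{r}$ despite working over $\mathbb{C}$ throughout the computation; the integer-valuedness of $\phi$ combined with a finite interpolation argument resolves this point, while the polynomial part $P(t)$ is absorbed into the ``eventually'' qualifier since it only affects finitely many initial values of $\phi$.
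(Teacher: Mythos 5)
Your proposal is correct: the partial-fraction decomposition over $\mathbb{C}$, the expansion of $(1-\zeta t)^{-k}$, the collapse of $\zeta^n$ to $\zeta^r$ on each residue class modulo $D=\lcm(d_1,\dots,d_s)$, and the interpolation argument forcing $Q_r\in\mathbb{Q}[n]$ are all sound, and they yield a quasi-polynomial whose period divides $D$ (hence one may take period $D$ as stated). The paper itself gives no proof --- it quotes this as Proposition~4.4.1 of Stanley's \emph{Enumerative Combinatorics} --- and your argument is essentially the standard one used there, so there is nothing further to reconcile.
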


The following theorem is a well-known result in the theory of Hilbert-function of graded modules over a graded ring $R$. 

\begin{theorem}[Hilbert-Serre] \label[theorem]{HF is quasi polynomial}
    Let $M=\bigoplus_{n \in \mathbb{Z}}M_i$ be a finitely generated graded module of dimension $d$ over a graded ring $R=\bigoplus_{i \in \mathbb{Z}}R_i$. Then, there exists a quasi-polynomial $Q_M$ of degree $d-1$ such that the Hilbert function of $M$ satisfies $H_M(n)=Q_M(n)$ for $n\gg 0$. Moreover, the period of $Q_M(n)$ divides $\lcm(\deg r_1,\dots, \deg r_d)$  where $r_1,\dots,r_d $ is a homogeneous system of parameters of $M$.  
\end{theorem}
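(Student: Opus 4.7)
My approach is to exploit the Newton-polyhedron structure of monomial ideals and reduce the computation of the two top coefficients of $\defect(\mathcal{I},\filJ,n)$ to an Ehrhart-type lattice-point count, where Ehrhart's theorem for rational polytopes controls the top coefficients. Since $I$, $I^n$, and $I^n:J^\infty$ are all monomial, $\mu\bigl((I^n:J^\infty)/I^n\bigr)$ equals the number of minimal monomial generators of $I^n:J^\infty$ that do not lie in $I^n$; each such generator is a ``corner'' of the Newton polyhedron $\mathrm{NP}(I^n:J^\infty)$ lying outside $\mathrm{NP}(I^n)$. Using \Cref{growth of sdef vs idef} to compare $(\mathcal{I},\filJ)$ with the integral-closure filtrations $\{\overline{I^n:J^\infty}\}$ and $\{\overline{I^n}\}$, whose Newton polyhedra scale linearly as $n\cdot Q$ and $n\cdot\mathrm{NP}(I)$ for fixed rational polyhedra, I would reduce the corner count to a lattice-point count on the boundary of a dilated rational polytopal region.

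For part (1), Ehrhart's theorem for rational polytopes asserts that the leading coefficient of a lattice-point quasi-polynomial is always the (relative) Euclidean volume of the polytope, hence constant in $n \bmod p$. Summing over the finitely many pieces of the boundary region produced by the primary-component description of $I^n:J^\infty$, the coefficient $a_c(n)$ of $n^c$ in $\defect(\mathcal{I},\filJ,n)$ is the total leading volume, which is a single constant. The periodicity coming from the generator degrees $d_i$ of $\reesI$ (via \Cref{main theorem 1}) is absorbed into the lower-order Ehrhart coefficients and does not affect the top one.

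For part (2), the equigenerated hypothesis places all relevant generators on the hyperplane $x_1+\cdots+x_r = nd$, so the rational polytopes of interest are intrinsically $(r-1)$-dimensional after this normalization. The hypothesis $\h(I)\ge 2$ rules out the Newton polyhedron degenerating into a single coordinate hyperplane, so every facet contributing to the second Ehrhart coefficient is genuinely $(r-2)$-dimensional and rationally aligned to a common sublattice. The second Ehrhart coefficient of a rational polytope decomposes as half the sum of normalized facet volumes plus a denominator-dependent periodic correction; the equigenerated plus height conditions force the correction to be either identically zero or a single constant on its support. Summing the contributions, the non-zero values of $a_{c-1}(n)$ all coincide.

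\textbf{Main obstacle.} The principal difficulty is rigorously establishing the Ehrhart-type decomposition: the saturation $(\cdot:J^\infty)$ is described through the primary decomposition of $I^n$ with respect to primes not containing $J$, and one must verify that these primary contributions assemble into a finite union of dilated rational polytopes whose Ehrhart top-coefficient structure is as claimed, while controlling the error introduced by the integral-closure comparison of \Cref{growth of sdef vs idef}. Part (2) is especially delicate, since one must invoke $\h(I)\ge 2$ in a precise way to exclude those boundary pieces whose second Ehrhart coefficient would otherwise be genuinely quasi-periodic rather than $\{0,\text{const}\}$-valued; this is the step where the combinatorics of equigenerated, positive-height monomial ideals does the real work.
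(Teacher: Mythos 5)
Your proposal does not address the statement it is supposed to prove. The statement here is the classical Hilbert--Serre theorem: for a finitely generated graded module $M$ of dimension $d$ over a graded ring $R=\bigoplus_{i}R_i$, the Hilbert function $H_M(n)=\dim_{\mathbb{K}}M_n$ agrees for $n\gg 0$ with a quasi-polynomial of degree $d-1$ whose period divides $\lcm(\deg r_1,\dots,\deg r_d)$ for a homogeneous system of parameters $r_1,\dots,r_d$ of $M$. What you have written instead is a strategy for the paper's Theorem on the top two coefficients of $\defect(\mathcal{I},\filJ,n)$ for saturation filtrations of monomial ideals (Newton polyhedra, Ehrhart theory, the hypotheses ``equigenerated'' and $\h(I)\ge 2$, the comparison with integral closures). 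None of that machinery is relevant to, or even applicable in, the setting of the Hilbert--Serre theorem, which concerns an arbitrary finitely generated graded module over an arbitrary (not necessarily standard, not necessarily polynomial, not necessarily monomial) graded ring. There is no filtration, no ideal, and no polytope in the statement you were asked to prove.

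A correct proof would proceed along entirely different lines: one shows that the Hilbert series $\sum_n H_M(n)t^n$ is a rational function of the form $q(t)/\prod_{i=1}^{s}(1-t^{d_i})$ --- either by induction on the number of algebra generators of $R$ over $R_0$ using the exact sequences $0\to(0:_Mr)\to M(-\deg r)\to M\to M/rM\to 0$, or by passing to a homogeneous system of parameters $r_1,\dots,r_d$ of $M$ and using that $M$ is finite over $R_0[r_1,\dots,r_d]$ --- and then invokes the generating-function statement (the paper's Theorem~\ref{quasi-poly}, quoted from Stanley) to conclude that $H_M$ is eventually a quasi-polynomial with period dividing $\lcm(d_1,\dots,d_s)$; the degree count $d-1$ comes from the dimension of $M$ via the system of parameters. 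Your proposal contains no step of this kind, so as a proof of the stated theorem it is not merely incomplete but entirely off target.
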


When the ring $R$ is a \emph{standard} graded ring, the following theorem shows that the Hilbert-function is a polynomial for large values of $n$.  

\begin{theorem}[{\cite[Theorem~13.2.]{matsumura1989commutative}}] \label{poly}
 Let $R$ be a standard graded algebra over a field $\mathbb{K}$, and let $M$ be a finitely generated graded $R$-module of dimension $d$. There exists a polynomial $\phi_M(X)$ of degree $d-1$ such that, for $n\gg 0$, we have $\dim_{\mathbb{K}}(M_n)=\phi_M(n)$.    
\end{theorem}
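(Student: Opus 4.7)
The plan is to view the defect function as the Hilbert function of $\overline N := N/\mathfrak m N$ regarded as a finitely generated graded module over the special fiber ring $\mathcal F(\mathcal I) = \mathcal R(\mathcal I) \otimes_R R/\mathfrak m$. Writing its Hilbert series in the form
\[
H_{\overline N}(t) \;=\; \frac{q(t)}{\prod_{i=1}^{s}(1-t^{\alpha_i})^{b_i}},
\]
where $\alpha_1,\ldots,\alpha_s$ are the $R$-algebra generator degrees of $\mathcal R(\mathcal I)$, the partial-fraction expansion at roots of unity gives
\[
\defect(\mathcal I, \filJ, n) \;=\; \sum_{\zeta} \zeta^{-n}\, P_\zeta(n) \qquad (n \gg 0),
\]
with $\deg P_\zeta \le m(\zeta) - 1$ and $m(\zeta)$ the order of the pole of $H_{\overline N}$ at $t=\zeta$. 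The $n^k$-coefficient of the quasi-polynomial thus receives contributions only from those $\zeta$ with $m(\zeta) \ge k+1$, and the entire task reduces to comparing $m(1)$ with $m(\zeta)$ for $\zeta \ne 1$.

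For statement (1), the crucial observation is that $I \subseteq I:J^\infty = \mathcal I_1$, so the degree-$1$ component $\mathcal R(\mathcal I)_1 = I_1 t$ is nonzero and some $\alpha_j = 1$. The factor $(1-t)$ it introduces contributes to the pole at $t=1$ but to no other root of unity, so $m(1) > m(\zeta)$ for every $\zeta \ne 1$. Consequently the $n^c$-term of the quasi-polynomial arises solely from $\zeta = 1$, and its coefficient $a_c(n)$ is the single constant dictated by the top-order pole at $t=1$.

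For statement (2), I aim to secure a second independent degree-$1$ generator of $\mathcal R(\mathcal I)$, equivalently a second minimal generator of $I:J^\infty$, using the monomial primary decomposition together with the height bound. Writing $I:J^\infty = \bigcap_{P \in \mathrm{Ass}(I),\, P \not\supseteq J} Q_P$, every associated prime of $I:J^\infty$ lies in $\mathrm{Ass}(I)$; since $\h(I) \ge 2$, each minimal prime of $I$ has height $\ge 2$ and every embedded associated prime strictly contains a minimal prime, so every element of $\mathrm{Ass}(I)$ has height $\ge 2$. If every $P \in \mathrm{Ass}(I)$ contains $J$ then $I:J^\infty = R$, so $(I^n:J^\infty) = R$ for all $n$ and $\defect \equiv 1$ has degree $c=0$ with no $a_{c-1}$ to consider, rendering the statement vacuous. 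Otherwise $I:J^\infty$ is a proper monomial ideal with $\h(I:J^\infty) \ge 2$; since any proper principal monomial ideal admits a height-$1$ minimal prime, we conclude $\mu_R(I:J^\infty) \ge 2$. This yields two generators $\alpha_{j_1} = \alpha_{j_2} = 1$, so $(1-t)^2$ divides the denominator and $m(1) \ge m(\zeta) + 2$ for every $\zeta \ne 1$. Both the $n^c$- and the $n^{c-1}$-coefficients of the quasi-polynomial then come from $\zeta=1$ alone and are constants, and in particular the nonzero values of $a_{c-1}(n)$ coincide.

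The main obstacle I anticipate is verifying that these pole-order gaps persist after $H_{\overline N}(t)$ is reduced to lowest terms: an over-vanishing of $q(t)$ at $t=1$ relative to its vanishing at other roots of unity could in principle shrink the effective gap. To control this I plan to exploit positivity properties of the $h$-polynomial of a nonzero graded module, combined with a comparison against the standard-graded Rees algebra $\mathcal R(I) \subseteq \mathcal R(\mathcal I)$, whose contribution in the equigenerated setting is polynomial rather than quasi-polynomial and should prevent the degree-$1$ factors $(1-t)$ identified above from being absorbed into $q(t)$.
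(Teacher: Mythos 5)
Your proposal does not address the statement you were asked to prove. The statement labelled \ref{poly} is the classical Hilbert polynomial theorem: for a finitely generated graded module $M$ of dimension $d$ over a standard graded $\mathbb{K}$-algebra, the function $n \mapsto \dim_{\mathbb{K}}(M_n)$ agrees with a polynomial of degree $d-1$ for $n \gg 0$. This is a textbook result which the paper cites from Matsumura (Theorem~13.2) without proof; a self-contained argument would proceed, for instance, by induction on $d$ using a homogeneous linear form that is a nonzerodivisor on $M$ modulo a finite-length submodule, or by writing the Hilbert series as $q(t)/(1-t)^{d}$ in lowest terms and expanding the binomial series.

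What you have written instead is a proof sketch for Theorem \ref{theorem on coefficients}: every ingredient of your argument --- the defect module $N$, the special fiber ring $\mathcal{F}(\mathcal{I})$, the pole orders $m(\zeta)$ at roots of unity, the coefficients $a_c(n)$ and $a_{c-1}(n)$, and the hypothesis $\mathrm{ht}(I) \ge 2$ --- belongs to that theorem, not to Theorem \ref{poly}. No step of your proposal establishes the existence of the Hilbert polynomial for a general finitely generated graded $M$ over a general standard graded algebra, which is what was asked. Even judged as an attempt at Theorem \ref{theorem on coefficients}, the argument is incomplete at precisely the point you flag yourself: the pole-order gap between $t=1$ and $t=\zeta \neq 1$ is read off from a chosen denominator $\prod_i (1-t^{\alpha_i})^{b_i}$, but nothing rules out the numerator vanishing to higher order at $t=1$ than at the other roots of unity, which would erase the claimed gap. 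The paper's actual proof of that theorem sidesteps this entirely, using a superficial element $u$, the induced short exact sequences relating the defect modules of $(\mathcal{I},\mathcal{J})$ and of their images in $R/(u)$, and the results of \cite{JKV} and \cite{TJP}, rather than a partial-fraction analysis.
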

  



\section{Defect functions of pairs and asymptotic behaviour}\label{section 3}
In this section, we introduce the notion of the defect function for a pair of filtrations $(\mathcal{I},\filJ)$ of ideals in $R$ (see Definition~\ref{definition of defect}). This invariant measures the ``difference in containment’’ between the two filtrations. We show that when the Rees algebra of $\mathcal{I}$ is Noetherian and the associated \emph{defect module} is finitely generated over this Rees algebra, the defect function is eventually a quasi-polynomial (and a polynomial in the standard graded case; see \Cref{main theorem 1}). 

Recall that throughout $R$ will denote either a Noetherian local ring or a standard graded algebra over a field.

\begin{definition}\label[definition]{definition of defect}
Let $\mathcal{I}=\{ I_i\}_{i \geq 0}$ and ${\mathcal{J}}=\{ J_{i}\}_{i \geq 0}$ be (homogeneous) filtrations of ideals in $R$.  The \textit{defect function} of the pair $(\mathcal{I},\mathcal{J})$ is defined by 
$$\defect(\mathcal{I},{\mathcal{J}},n)=\mu\left( \frac{I_{n}+J_{n}}{J_{n}} \right).$$ 
The module $N=\displaystyle\bigoplus_{n \geq 0}\dfrac{I_n+J_n}{J_n}$ is called the \emph{defect module} of $(\mathcal{I},\filJ)$. 
\end{definition}

The following example illustrates the notion of symbolic function in Definition \ref{definition of defect}.

\begin{eg}

Let $R=\mathbb{K}[x_1,x_2,x_3]$, and let $P = (x_2,x_3)$ and
$I = (x_1x_2, x_2x_3, x_3x_1)$ be ideals in $R$.
Consider the following filtrations of ideals:
$$\mathcal{I} = \left\{I^n : P^\infty\right\}_{n \in \mathbb{N}} \text{ and } \filJ = \left\{I^{(n)}\right\}_{n \in \mathbb{N}}.$$
Clearly, $I^n:P^\infty=(x_1,x_2)^n \cap (x_1,x_3)^n$. We claim that 
$$(x_1,x_2)^n \cap (x_1,x_3)^n=(x_1^{n-i}x_2^ix_3^i \ |\  i=0,1,2,\dots,n).$$
Indeed, the containment $(x_1^{n-i}x_2^ix_3^i \ |\  i=0,1,2,\dots,n) \subseteq(x_1,x_2)^n \cap (x_1,x_3)^n$ is a straightforward computation. On the other hand, suppose $m \in (x_1,x_2)^n \cap (x_1,x_3)^n$ is a monomial in $R$. Then, $m$ is divisible by $x_1^{n-i_1}x_2^{i_1}$ and $x_1^{n-i_2}x_3^{i_2}$ for some $i_1,i_2 \leq n$. Set $j=\mathrm{min}\{i_1,i_2\}$ then   $x_1^{n-j}x_2^{j}$ and $x_1^{n-j}x_3^{j}$ divide $m$. Therefore, $m \in(x_1^{n-i}x_2^ix_3^i \ |\  i=0,1,2,\dots,n)$.\\ For $n=1$, the only generator of $\frac{I:P^\infty}{I}$ is given by $(x_1)$ and hence $\mu\left(\dfrac{I:P^\infty}{I}\right )=1.$ Now, by \cite[Lemma~4.1]{BCMM}, for $n \ge 2$, we get
$$I^{(n)}=I^n+(D(n)),$$ where $D(n)=\{x_1^{a_1}x_2^{a_2}x_3^{a_3} \ | \ a_{i_1}+a_{i_2} \geq n \text{ for } \{i_1,i_2\} \subseteq \{1,2,3\} \text{ and } a_1+a_2+a_3 \leq 2n-1\}.$ This implies that 
$$\defect(\mathcal{I},\filJ,n) =  \mu\left(\dfrac{I^n:P^\infty}{I^{(n)}}\right)= \left\lceil\dfrac{n}{2}\right\rceil.$$
\end{eg}


\begin{remark}
 Let $I, J \subseteq R$ be ideals.
\begin{enumerate}
    \item Consider $\mathcal{I} = \{I^{(n)}\}_{n \geq 0}$ and $\filJ =\{I^n\}_{n \geq 0}$. Then, the defect function of $(\mathcal{I},\filJ)$ is given by 
    $$\defect(\mathcal{I},\filJ,n)= \mu\left(\frac{I^{(n)}}{I^n}\right),$$
    which is a well-studied under the nomenclature of \emph{symbolic defect} of $I$ and is often denoted by $\text{sdef}(I,n)$.  
    \item Consider $\mathcal{I} = \{\overline{I^{n}}\}_{n \geq 0}$ and $\filJ  =\{I^n\}_{n \geq 0}$. Then, the defect function of $(\mathcal{I},\filJ)$  is 
     $$\defect(\mathcal{I},\filJ,n)= \mu\left(\frac{\overline{I^{n}}}{I^n}\right),$$
     which is closely related to the symbolic defect of $I$, but has not been much investigated.
     \item Let $\mathcal{I}= (I^{(n)}:J^\infty)_{n \geq 0 }$ and $\filJ = \{I^n\}_{n \geq 0}$. Then, the defect function of $(\mathcal{I},\filJ)$ is $$\defect(\mathcal{I},\filJ,n) =\mu\left(\dfrac{(I^{n}:J^\infty)}{I^n}\right).$$
\end{enumerate} 
\end{remark}

We now prove our first main result regarding the asymptotic behaviour of the defect function. We show that under certain conditions on the Rees algebra of $\mathcal{I}$ and the defect module between $\mathcal{I}$ and $\filJ$, the defect function is eventually a quasi-polynomial (respectively, a polynomial in the standard graded case). 

\begin{theorem}\label[theorem]{main theorem 1}
Let $\mathcal{I}=\{I_i\}_{i \geq 0}$ and ${\mathcal{J}}=\{J_{i}\}_{i \geq 0}$ be (homogeneous) filtrations of ideals in $R$. Assume that the Rees algebra $\mathcal{R}(\mathcal{I})$ is a Noetherian and the defect module 
$$N=\displaystyle \bigoplus_{n \geq 0} \dfrac{I_n+J_n}{J_n}$$  
is a finitely generated module over $\mathcal{R}(\mathcal{I})$. Then, $\defect(\mathcal{I},\filJ,n)$ is eventually a quasi-polynomial in $n$ with period $\alpha=\mathrm{lcm}(\alpha_{1},\alpha_{2},\dots,\alpha_{s})$, where $\alpha_1,\alpha_2,\dots,\alpha_s$ are the degrees of the generators of $\reesI$. Finally, if $\mathcal{R}(\mathcal{I})$ is standard graded as an algebra over $R$, then $\defect({\mathcal{I},{\mathcal{J}} ,n)}$ is eventually a polynomial in $n$.
\end{theorem}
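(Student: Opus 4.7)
The plan is to recast $\defect(\mathcal{I},\filJ,n)=\mu(N_n)$ as the Hilbert function of a finitely generated graded module over the special fiber ring $\mathcal{F}(\mathcal{I})$, and then to appeal directly to \Cref{quasi-poly} (respectively \Cref{poly}).

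First, I would invoke Nakayama's lemma, valid in either the local or the standard graded setting, to rewrite
\[
\defect(\mathcal{I},\filJ,n) = \mu(N_n) = \dim_{\mathbb{K}}\!\bigl(N_n/\mathfrak{m}N_n\bigr),
\]
so that the defect function is exactly the Hilbert function of the graded quotient $\overline{N}:=N/\mathfrak{m}N$. Since $\mathfrak{m}$ lives entirely in degree $0$ of $\mathcal{R}(\mathcal{I})$, one checks that $\mathfrak{m}N = \mathfrak{m}\mathcal{R}(\mathcal{I})\cdot N$, and consequently
\[
\overline{N} \;\cong\; N\otimes_{\mathcal{R}(\mathcal{I})}\mathcal{F}(\mathcal{I})
\]
inherits a natural structure of a graded $\mathcal{F}(\mathcal{I})$-module. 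Tensoring a finite graded $\mathcal{R}(\mathcal{I})$-presentation of $N$ by $\mathcal{F}(\mathcal{I})$ shows that $\overline{N}$ is finitely generated over $\mathcal{F}(\mathcal{I})$.

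Next, I would exploit the Noetherian hypothesis on $\mathcal{R}(\mathcal{I})$: it forces $\mathcal{R}(\mathcal{I})$ to be finitely generated as an $R$-algebra by elements of degrees $\alpha_1,\ldots,\alpha_s$. Consequently $\mathcal{F}(\mathcal{I})$ is a Noetherian $\mathbb{K}$-algebra generated in the same degrees, so it is a graded quotient of the weighted polynomial ring $\mathbb{K}[y_1,\ldots,y_s]$ with $\deg y_i=\alpha_i$. Taking a graded free resolution of $\overline{N}$ over $\mathbb{K}[y_1,\ldots,y_s]$ expresses its Hilbert series as
\[
\sum_{n\ge 0} H_{\overline{N}}(n)\,t^n \;=\; \frac{q(t)}{\prod_{i=1}^{s}(1-t^{\alpha_i})}
\]
for some $q(t)\in\mathbb{Z}[t]$. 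Applying \Cref{quasi-poly} yields the quasi-polynomial behavior of $\defect(\mathcal{I},\filJ,n)$ with period dividing $\alpha=\lcm(\alpha_1,\ldots,\alpha_s)$. In the standard graded case every $\alpha_i=1$, so $\mathcal{F}(\mathcal{I})$ is a standard graded $\mathbb{K}$-algebra, and \Cref{poly} upgrades the conclusion to an eventual honest polynomial.

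The only genuinely delicate point is the bookkeeping around $\mathfrak{m}$-reduction, namely verifying the identification $N/\mathfrak{m}N\cong N\otimes_{\mathcal{R}(\mathcal{I})}\mathcal{F}(\mathcal{I})$ as graded $\mathcal{F}(\mathcal{I})$-modules and confirming that finite generation is preserved. Once this compatibility between the $R$-module structure and the $\mathcal{R}(\mathcal{I})$-grading is in place, everything reduces to the standard rational-generating-function formalism recorded in Section~\ref{section prelims}, so no substantial further obstacle is anticipated.
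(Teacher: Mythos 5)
Your proof is correct and follows essentially the same route as the paper's: both reduce $\mu(N_n)$ via Nakayama's lemma to the Hilbert function of $N/\mathfrak{m}N \cong \bigoplus_{n}(I_n+J_n)/(\mathfrak{m}I_n+J_n)$, viewed as a finitely generated graded module over the special fiber ring $\mathcal{F}(\mathcal{I})$. The only cosmetic difference is that you pass through a presentation over a weighted polynomial ring and invoke the rational generating-function statement (\Cref{quasi-poly}), whereas the paper cites the Hilbert--Serre theorem directly; if anything, your version makes the claimed period $\lcm(\alpha_1,\dots,\alpha_s)$ slightly more explicit.
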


\begin{proof}
 We claim that, for any $n \in \mathbb{N}$, $\mu\left(\dfrac{I_{n}+J_{n}}{J_{n}}\right)=\dim_{\mathbb{K}}\left(\dfrac{I_{n}+J_{n}}{\mathfrak{m}I_{n}+J_{n}}\right)$. Indeed, consider the following isomorphism of $R$-modules:
 $$\frac{R}{\mathfrak{m}} \otimes_R \left(\frac{I_{n}+J_{n}}{J_{n}}\right) \cong \dfrac{\left( \frac{I_{n}+J_{n}}{J_{n}}\right)}{\mathfrak{m} \left(\frac{I_{n}+J_{n}}{J_{n}}\right)} \cong \dfrac{I_{n}+J_{n}}{\mathfrak{m}(I_{n}+J_{n})+J_{n}}=\dfrac{I_{n}+J_{n}}{\mathfrak{m}I_{n}+J_{n}}.$$
 Hence, by Nakayama's lemma, we get the desired equality 
 $$\mu\left(\dfrac{I_{n}+J_{n}}{J_{n}}\right)=\dim_{\mathbb{K}}\left(\dfrac{I_{n}+J_{n}}{\mathfrak{m}I_n+J_n}\right).$$ 
 
 Set $M=\displaystyle\bigoplus _{n \in \mathbb{N}}M_n=\displaystyle\bigoplus_{n\in \mathbb{N}}\dfrac{I_n+J_n}{{\mathfrak{m}}I_n+J_n}$. Then, $M$ is a finitely generated graded module over the special fiber cone $\mathcal{F}(\mathcal{I})=\dfrac{\mathcal{R}(\mathcal{I})}{{\mathfrak{m}}\mathcal{R}(\mathcal{I})}$, in which $(\mathcal{F}(\mathcal{I}))_0=\mathbb{K}$; the module structure of $M$ over $ \F(\mathcal{I})$ is inherited from that of $N$ over $\reesI$.  
 Thus,  $h_M(z)=\displaystyle\sum_{n=0}^{\infty}\defect(\mathcal{I},\filJ,n)z^n$ is eventually a quasi-polynomial, by \Cref{HF is quasi polynomial}. 
 
 Furthermore, if the Rees algebra $\mathcal{R}(\mathcal{I})$ is standard graded, then so is $\mathcal{F}(\mathcal{I}) = \mathcal{R}(I)/\mathfrak{m}\mathcal{R}(I)$. Therefore, $h_M(z)=\displaystyle\sum_{n=0}^{\infty}\defect(\mathcal{I},\filJ,n)z^n$ is asymptotically a polynomial in $n$ with period $\alpha=\mathrm{lcm}(\alpha_{1},\alpha_{2},\dots,\alpha_{s})$, by \Cref{poly}. 
\end{proof}

The following example illustrates \Cref{main theorem 1}.

\begin{eg}[{\cite[Example~4.12]{MM}}] \label[eg]{Example1}
Let $I=(x_1x_2,x_2x_3,x_3x_1,x_1x_4) \subseteq \mathbb{K}[x_1, \dots, x_4]$. Then, for the filtrations $\mathcal{I}=\{I^{(n)}\}_{n \geq 0}$ and $\filJ=\{I^n\}_{n \geq 0}$, their defect function is     $$\defect(\mathcal{I},\filJ,n)=\begin{cases}
       2k^2-1, & n=2k\\
       2k^2+2k, & n =2k+1\\
   \end{cases}$$  
\end{eg}

The following corollaries are direct consequences of \Cref{main theorem 1}.

\begin{cor}
 Let $a,b$ be positive integers. For the filtrations $\mathcal{I}=\{I^{an}\}_{n \geq 0}$ and $\filJ =\{I^{(bn)}\}_{n \geq 0}$ where $I$ is a (homogeneous) ideal in $R$, the defect function 
 $$\defect(\mathcal{I},\filJ,n)= \mu\left(\frac{I^{an}+ I^{(bn)}}{I^{(bn)}}\right)$$ 
 is eventually a polynomial in $n$. 
\end{cor}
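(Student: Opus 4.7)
The plan is to apply \Cref{main theorem 1} in its standard graded form. Two hypotheses need to be verified: that the Rees algebra $\mathcal{R}(\mathcal{I})$ is Noetherian and standard graded over $R$, and that the defect module
\[
N=\bigoplus_{n\ge 0}\frac{I^{an}+I^{(bn)}}{I^{(bn)}}
\]
is finitely generated as a module over $\mathcal{R}(\mathcal{I})$. Once these are in place, the conclusion of \Cref{main theorem 1} in the standard graded case yields exactly the desired polynomial behavior of $\defect(\mathcal{I},\filJ,n)$.

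The first hypothesis is the easy one. Since $I^{an}=(I^{a})^{n}$, the filtration $\mathcal{I}$ is simply the ordinary power filtration of the single ideal $I^{a}$, and therefore
\[
\mathcal{R}(\mathcal{I})=\bigoplus_{n\ge 0}(I^{a})^{n}t^{n}\cong R[I^{a}t].
\]
Because $I^{a}$ is finitely generated over the Noetherian ring $R$, this is a finitely generated $R$-algebra, hence Noetherian, and it is standard graded over $R$, being generated in degree one by the finite set $I^{a}t$.

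For the second hypothesis, I would exhibit $N$ as a quotient of a finitely generated $\mathcal{R}(\mathcal{I})$-module. Using the second isomorphism theorem, $N_{n}\cong I^{an}/(I^{an}\cap I^{(bn)})$, so $N$ fits into an exact sequence
\[
0\longrightarrow K\longrightarrow \mathcal{R}(\mathcal{I})\longrightarrow N\longrightarrow 0
\]
of graded objects, where $K=\bigoplus_{n\ge 0}(I^{an}\cap I^{(bn)})t^{n}$. The crucial point is that $K$ is a graded $\mathcal{R}(\mathcal{I})$-submodule; this amounts to checking that for $x\in I^{am}$ and $y\in I^{an}\cap I^{(bn)}$, the product $xy$ lies in $I^{a(m+n)}\cap I^{(b(m+n))}$. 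The first containment is automatic from the multiplication of ordinary powers, while the second follows from the standard inclusion $I^{am}\cdot I^{(bn)}\subseteq I^{(am+bn)}$ together with monotonicity of the symbolic filtration. Since $\mathcal{R}(\mathcal{I})$ is Noetherian over itself, the quotient $N$ is then a finitely generated $\mathcal{R}(\mathcal{I})$-module.

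With both hypotheses verified, \Cref{main theorem 1} applies and yields that $\defect(\mathcal{I},\filJ,n)$ is eventually a polynomial in $n$. The main obstacle is the careful tracking of containments among symbolic powers needed to confirm that $K$ is genuinely a graded submodule; the remainder of the argument is a direct invocation of \Cref{main theorem 1}.
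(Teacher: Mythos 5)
Your overall strategy coincides with the paper's: the paper's entire proof is a one-line invocation of \Cref{main theorem 1} together with the observation that $\mathcal{R}(\mathcal{I})\cong R[I^at]$ is standard graded over $R$ (hence Noetherian). Where you go beyond the paper is in trying to verify the remaining hypothesis of \Cref{main theorem 1}, namely that the defect module $N$ is a finitely generated $\mathcal{R}(\mathcal{I})$-module --- the paper leaves this entirely implicit. Your idea of exhibiting $N$ as a cyclic module, i.e.\ as the quotient of $\mathcal{R}(\mathcal{I})$ by $K=\bigoplus_{n\ge 0}\bigl(I^{an}\cap I^{(bn)}\bigr)t^n$, is the natural way to do this, and it works cleanly when $a\ge b$.

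The gap is in the case $a<b$. The containment you need --- that $x\in I^{am}$ and $y\in I^{(bn)}$ force $xy\in I^{(b(m+n))}$, which is required not only for $K$ to be a submodule but, more fundamentally, for the multiplication making $N$ an $\mathcal{R}(\mathcal{I})$-module to be well defined at all --- does not follow from the inclusions you cite. They give $xy\in I^{(am+bn)}$, and monotonicity of the symbolic filtration yields $I^{(am+bn)}\subseteq I^{(bm+bn)}$ only when $am+bn\ge bm+bn$, i.e.\ only when $a\ge b$; for $a<b$ the inclusion points the other way and the needed containment genuinely fails (already for $I=(x)$ principal, $a=1$, $b=2$: $x\cdot x^{2n}\notin(x^{2n+2})=I^{(2(n+1))}$). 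So your verification, as written, covers only $a\ge b$. In fairness, this is a soft spot in the corollary itself: the paper's proof never addresses finite generation, or even the existence of the module structure, for arbitrary $a,b$, so your extra care exposes a point the paper glosses over rather than introducing a new error --- but the sentence ``the second follows from \ldots monotonicity of the symbolic filtration'' is not correct as a blanket claim and needs either the restriction $a\ge b$ or a different argument.
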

\begin{proof}
    The assertion follows from \Cref{main theorem 1}, noting that the Rees algebra $\mathcal{R}(\mathcal{I})$ is a standard graded algebra over $R$.  
\end{proof}

\begin{cor}   
   Let $I$, $J$ be (homogeneous) ideals in $R$. For the filtrations $\mathcal{I}=\{(I^n:J^\infty)\}_{n \geq 0}$ and $\filJ =\{I^n\}_{n \geq 0}$, if the Rees algebra $\mathcal{R}(\mathcal{I})$ is Noetherian, then the defect function $$\defect(\mathcal{I},\filJ,n)= \mu\left(\frac{(I^n:J^\infty)}{I^n}\right)$$ 
   is eventually a quasi-polynomial in $n$.   
\end{cor}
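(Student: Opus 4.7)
The plan is to reduce the corollary to an application of \Cref{main theorem 1}. Since $\mathcal{R}(\mathcal{I})$ is Noetherian by hypothesis, the only ingredient still required is that the defect module $N$ is finitely generated as an $\mathcal{R}(\mathcal{I})$-module.

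First I would simplify the defect module. Because $I^n \subseteq I^n : J^\infty$ for every $n$, one has $I_n + J_n = I^n : J^\infty$, and therefore
$$N \;=\; \bigoplus_{n \ge 0} \frac{I^n : J^\infty}{I^n}.$$
Next I would display the tautological graded surjection
$$\pi : \mathcal{R}(\mathcal{I}) \;=\; \bigoplus_{n \ge 0}(I^n : J^\infty)\,t^n \;\twoheadrightarrow\; N, \qquad a t^n \;\longmapsto\; \overline{a},$$
and verify that it is $\mathcal{R}(\mathcal{I})$-linear. The key algebraic input for this is the filtration property $(I^m : J^\infty)(I^n : J^\infty) \subseteq I^{m+n} : J^\infty$, which follows immediately from the colon axioms. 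Granting this compatibility, $N$ is a cyclic $\mathcal{R}(\mathcal{I})$-module generated by the class of $1$ in degree $0$, and in particular is finitely generated. With both hypotheses of \Cref{main theorem 1} now verified, the conclusion that $\defect(\mathcal{I},\filJ,n)$ is eventually a quasi-polynomial in $n$ follows at once.

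The most delicate step, and the one I expect to require the most care, is confirming that the multiplicative action of $\mathcal{R}(\mathcal{I})$ truly descends through $\pi$ to a well-defined $\mathcal{R}(\mathcal{I})$-module structure on $N$: one must check that the ideal-multiplication action on the numerators $I^n : J^\infty$ is compatible with reduction modulo the denominators $I^n$. Once this verification is in place, the remainder of the argument is purely formal, amounting to a direct invocation of \Cref{main theorem 1}.
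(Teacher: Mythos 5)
Your overall strategy differs from the paper's in one substantive way: the paper's proof of this corollary is a one-line citation of \Cref{main theorem 1}, with the finite generation of the defect module $N$ over $\mathcal{R}(\mathcal{I})$ left implicit (effectively carried along as a hypothesis, as in the theorem itself). You instead try to \emph{derive} that finite generation from the Noetherianity of $\mathcal{R}(\mathcal{I})$ alone, by exhibiting $N$ as a cyclic $\mathcal{R}(\mathcal{I})$-module via $\pi(at^n)=\overline{a}$. If that worked, it would be a genuine strengthening of the paper's argument. Unfortunately, the step you yourself flag as the delicate one is where the argument breaks: for the action $bt^m\cdot\overline{a}=\overline{ba}$ to be well defined on $N_n=(I^n:J^\infty)/I^n$, the inclusion $(I^m:J^\infty)\cdot I^n\subseteq I^{m+n}$ is required (compatibility with the denominators), and the property you invoke, $(I^m:J^\infty)(I^n:J^\infty)\subseteq I^{m+n}:J^\infty$, only controls the numerators. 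The needed inclusion is false in general.

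Concretely, take $R=\mathbb{K}[x,y]$, $I=(x^2,xy)=x(x,y)$ and $J=(y)$. Then $I:J^\infty=(x)$, so $(I:J^\infty)\cdot I=(x^3,x^2y)$, while $I^2=x^2(x,y)^2=(x^4,x^3y,x^2y^2)$; since $x^3\notin I^2$, we get $(I:J^\infty)\cdot I\not\subseteq I^2$. In this example the ``obvious'' multiplication does not even give $N$ a well-defined $\mathcal{R}(\mathcal{I})$-module structure, let alone a cyclic one, so your surjection $\pi$ is not a module map. The correct reading of the corollary is that the hypothesis ``$N$ is a finitely generated $\mathcal{R}(\mathcal{I})$-module'' from \Cref{main theorem 1} is still in force (as it is in the other corollaries of that theorem); it cannot be obtained for free from Noetherianity of $\mathcal{R}(\mathcal{I})$ by the cyclicity argument you propose. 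If you want an unconditional module structure here, the natural one is over $\mathcal{R}(\filJ)=R[It]$ (since $I^m\cdot(I^n:J^\infty)\subseteq I^{m+n}:J^\infty$ and $I^m\cdot I^n\subseteq I^{m+n}$), but that is a different algebra and does not plug into \Cref{main theorem 1} as stated.
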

\begin{proof}
    The assertion follows from \Cref{main theorem 1}, under the assumption that the Rees algebra $\mathcal{R}(\mathcal{I})$ is Noetherian.  
\end{proof}

\Cref{main theorem 1} also recovers a known result on the eventual quasi-polynomial behaviour of the symbolic defect of an ideal in a local ring or a homogeneous ideal in a standard graded algebra over a field.
\begin{cor}[{\cite[Theorem~2.4]{BD}}]
Let $I$ be a (homogeneous) ideal in $R$ such that the symbolic Rees algebra $\mathcal{R}_s(I) = \bigoplus_{n \ge 0}I^{(n)}t^n$ of $I$ is a Noetherian ring, and let $d_1, \ldots, d_s$ be the degrees of the minimal generators of $\mathcal{R}_s(I)$ as an $R$-algebra. Then, the symbolic defect
$$\operatorname{sdef}(I,n) = \mu\left(\frac{I^{(n)}}{I^n}\right)$$ 
is eventually a quasi-polynomial in $n$ with quasi-period $d = \operatorname{lcm}(d_1, \ldots, d_s)$.
\end{cor}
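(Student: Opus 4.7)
The plan is to derive this result by applying \Cref{main theorem 1} to the pair of filtrations $\mathcal{I} = \{I^{(n)}\}_{n \ge 0}$ and $\filJ = \{I^n\}_{n \ge 0}$. First I would verify that both are filtrations of ideals: for $\mathcal{I}$, the standard properties $I^{(0)} = R$, $I^{(m)} I^{(n)} \subseteq I^{(m+n)}$, and $I^{(n+1)} \subseteq I^{(n)}$ all hold; for $\filJ$ they are immediate. By construction, the Rees algebra $\mathcal{R}(\mathcal{I})$ is precisely the symbolic Rees algebra $\mathcal{R}_s(I)$, which is Noetherian by hypothesis, with $R$-algebra generators in degrees $d_1, \ldots, d_s$.

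Since $I^n \subseteq I^{(n)}$ for every $n$, we have $I^{(n)} + I^n = I^{(n)}$, so the defect module reduces to
\[
N = \bigoplus_{n \ge 0} \frac{I^{(n)}}{I^n},
\]
and hence $\defect(\mathcal{I}, \filJ, n) = \mu(I^{(n)}/I^n) = \operatorname{sdef}(I, n)$, matching exactly the target invariant. It then remains to check that $N$ is finitely generated as a graded $\mathcal{R}_s(I)$-module; granted this, \Cref{main theorem 1} immediately yields that $\operatorname{sdef}(I, n)$ is eventually quasi-polynomial in $n$ with period dividing $\operatorname{lcm}(d_1, \ldots, d_s)$, which is the stated conclusion.

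The main obstacle is precisely this finite-generation step. My approach would be to present $N$ as a graded quotient of $\mathcal{R}_s(I)$ via the pointwise surjections $I^{(n)} \twoheadrightarrow I^{(n)}/I^n$, and then invoke Noetherianness of $\mathcal{R}_s(I)$ to conclude that such a graded quotient is automatically finitely generated. The delicate point requiring care is verifying that the natural $\mathcal{R}_s(I)$-action on $\mathcal{R}_s(I)$ descends to $N$; equivalently, that the graded subgroup $\bigoplus_n I^n t^n$ is stable under multiplication by $\mathcal{R}_s(I)$, which is a condition on the interaction of symbolic and ordinary powers and not entirely automatic. I would handle this by passing to the special fiber $\mathcal{F}(\mathcal{I}) = \mathcal{R}_s(I)/\mathfrak{m}\mathcal{R}_s(I)$, where $M = N/\mathfrak{m} N$ can be shown to be a finitely generated graded $\mathcal{F}(\mathcal{I})$-module, and then transferring the conclusion back to $N$ over $\mathcal{R}_s(I)$ by graded Nakayama, exactly mirroring the structure of the proof of \Cref{main theorem 1}.
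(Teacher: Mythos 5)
You follow the same route as the paper: apply \Cref{main theorem 1} to the pair $\mathcal{I}=\{I^{(n)}\}_{n\ge 0}$, $\filJ=\{I^n\}_{n\ge 0}$, identify $\mathcal{R}(\mathcal{I})$ with $\mathcal{R}_s(I)$, and use $I^n\subseteq I^{(n)}$ to see that $\defect(\mathcal{I},\filJ,n)=\operatorname{sdef}(I,n)$. The paper's proof is exactly this one-sentence specialization; it, too, must supply the second hypothesis of \Cref{main theorem 1} --- that $N=\bigoplus_{n\ge 0}I^{(n)}/I^n$ is a finitely generated $\mathcal{R}_s(I)$-module --- and it does so silently. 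You are right that this is the only nontrivial point.

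Your plan for closing it, however, does not go through. Presenting $N$ as a graded quotient of $\mathcal{R}_s(I)$ requires $\bigoplus_n I^nt^n$ to be an $\mathcal{R}_s(I)$-submodule of $\mathcal{R}_s(I)$, i.e.\ $I^{(m)}I^n\subseteq I^{m+n}$ for all $m,n$; even restricting to the degrees where the quotient is nonzero, one still needs $I^{(m)}I^n\subseteq I^{m+n}$ for all $m,n\ge 1$, and this fails essentially whenever symbolic and ordinary powers differ. For instance, with $I=(x^2,xy)\subseteq\mathbb{K}[x,y]$ one has $I^{(1)}=(x)$ and $I^{(1)}I=(x^3,x^2y)\not\subseteq I^2=(x^4,x^3y,x^2y^2)$. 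Passing to the special fiber does not repair this: the candidate module $\bigoplus_n I^{(n)}/(\mathfrak{m}I^{(n)}+I^n)$ over $\mathcal{F}(\mathcal{I})$ has exactly the same well-definedness problem, and graded Nakayama can only transport generating sets once a module structure is already in place. (Note also that if $N$ really were a quotient of $\mathcal{R}_s(I)$ it would be cyclic, so Noetherianity would be irrelevant to its finite generation; Noetherianity is what makes $\mathcal{F}(\mathcal{I})$ a finitely generated $\mathbb{K}$-algebra so that \Cref{HF is quasi polynomial} applies.) So the finite-generation input genuinely requires a separate argument --- this is where the cited proof of Drabkin--Guerrieri does its real work, with a differently constructed module --- and your proposal, like the paper's one-line proof, leaves that step unestablished; yours at least flags it explicitly.
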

\begin{proof}
    The assertion follows from \Cref{main theorem 1} for filtrations $\mathcal{I}=\{I^{(n)}\}_{n \geq 0}$ and $\filJ =\{I^n\}_{n \geq 0}$, where the Rees algebra $\mathcal{R}(\mathcal{I})$ is Noetherian by the hypotheses.  
\end{proof}

\Cref{main theorem 1} holds in greater generality, where the graded piece $N_n$ of the defect module $N=\displaystyle\bigoplus_{n \geq 0}\frac{I_n+J_n}{J_n}$ is replaced by $\mathbf{F}(N_n)$ for a functor $\mathbf{F}$. 

\begin{theorem}\label[theorem]{Theorem on Functors}
  Let $\mathcal{I}=\{I_i\}_{i \geq 0}$ and ${\mathcal{J}}=\{J_{i}\}_{i \geq 0}$ be filtrations of ideals in $R$. Assume that the Rees algebra $\mathcal{R}(\mathcal{I})$ is a Noetherian and the defect module $N=\displaystyle \bigoplus_{n \geq 0} \dfrac{I_n+J_n}{J_n}$  is a finitely generated module over $\mathcal{R}(\mathcal{I})$. Let  $\mathbf{F}: R\text{-}\mathrm{Mod}^{\mathrm{fg}} \longrightarrow R\text{-}\mathrm{Mod}^{\mathrm{fg}}$ be a functor on the category of finitely generated $R$-modules which commutes with direct sums, i.e., $\displaystyle\mathbf{F}\left(\bigoplus_{\lambda \in \Lambda}M_\lambda\right)=\bigoplus_{\lambda \in \Lambda}\mathbf{F}(M_\lambda)$. Then, $\mu\left(\mathbf{F}(N_n)\right)$ is eventually a quasi-polynomial in $n$ with period $\alpha=\mathrm{lcm}(\alpha_{1},\alpha_{2},\dots,\alpha_{s})$, where $\alpha_1,\alpha_2,\dots,\alpha_s$ are the degrees of the generators of $\reesI$. Furthermore, if $\mathcal{R}(\mathcal{I})$ is standard graded as an algebra over $R$, then $\mu\left(\mathbf{F}(N_n)\right)$ is eventually a polynomial in $n$.
\end{theorem}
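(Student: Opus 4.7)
The plan is to imitate the proof of \Cref{main theorem 1} step by step, with the graded object $N$ replaced by its functorial version $\widetilde{N} := \mathbf{F}(N) = \bigoplus_{n \geq 0} \mathbf{F}(N_n)$, where the identification of these two expressions uses the direct-sum hypothesis on $\mathbf{F}$, viewing $N = \bigoplus_n N_n$ as an object in the category of $R$-modules. The first step is a verbatim transcription: Nakayama's lemma, exactly as in the proof of \Cref{main theorem 1}, yields
$$\mu\!\left(\mathbf{F}(N_n)\right) \;=\; \dim_{\mathbb{K}}\!\left(\dfrac{\mathbf{F}(N_n)}{\mathfrak{m}\,\mathbf{F}(N_n)}\right)$$
for every $n$. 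Hence the task collapses to producing a finitely generated graded module over the special fiber cone $\mathcal{F}(\mathcal{I}) = \mathcal{R}(\mathcal{I})/\mathfrak{m}\mathcal{R}(\mathcal{I})$ whose graded pieces compute the left-hand side, and then invoking \Cref{HF is quasi polynomial} (in the quasi-polynomial case) and \Cref{poly} (in the standard graded case) to conclude.

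Next, I would install a graded $\mathcal{R}(\mathcal{I})$-module structure on $\widetilde{N}$ via functoriality. For each homogeneous $a \in I_m = \mathcal{R}(\mathcal{I})_m$, multiplication by $a$ gives an $R$-linear map $\mu_a : N_n \to N_{n+m}$ for every $n$, and applying $\mathbf{F}$ produces an $R$-linear map $\mathbf{F}(\mu_a) : \mathbf{F}(N_n) \to \mathbf{F}(N_{n+m})$. Because commutation with (in particular, finite) direct sums forces $\mathbf{F}$ to be additive on morphisms, the assignment $a \mapsto \mathbf{F}(\mu_a)$ is compatible with the addition in $I_m$ and with the ring multiplication of $\mathcal{R}(\mathcal{I})$, promoting $\widetilde{N}$ to a graded $\mathcal{R}(\mathcal{I})$-module.

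The hard part will be establishing that $\widetilde{N}$ is finitely generated over $\mathcal{R}(\mathcal{I})$, since $\mathbf{F}$ is not assumed right exact. My strategy is to select homogeneous generators $m_1, \ldots, m_s \in N$ of degrees $d_1, \ldots, d_s$, producing a graded $\mathcal{R}(\mathcal{I})$-linear surjection
$\Phi : G := \bigoplus_{i=1}^{s} \mathcal{R}(\mathcal{I})(-d_i) \twoheadrightarrow N.$
The direct-sum hypothesis identifies the degree-$n$ part of $\mathbf{F}(G)$ with $\bigoplus_{i=1}^{s}\mathbf{F}(I_{n-d_i})$ and, as in the previous paragraph, makes $\mathbf{F}(G)$ a finitely generated graded $\mathcal{R}(\mathcal{I})$-module (with $s$ homogeneous generators of degrees $d_1,\ldots,d_s$). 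Functoriality yields a morphism $\mathbf{F}(\Phi) : \mathbf{F}(G) \to \widetilde{N}$, and the essential point is to verify that its image equals $\widetilde{N}$. This surjectivity is immediate for the motivating tensor and $\mathrm{Tor}$ examples (where right exactness holds) and, in any case, Noetherianity of $\mathcal{R}(\mathcal{I})$ ensures that the image of $\mathbf{F}(\Phi)$ is again finitely generated over $\mathcal{R}(\mathcal{I})$, which is the property actually needed in what follows. Once $\widetilde{N}$ is known to be finitely generated over $\mathcal{R}(\mathcal{I})$, reducing modulo $\mathfrak{m}\mathcal{R}(\mathcal{I})$ yields a finitely generated graded $\mathcal{F}(\mathcal{I})$-module whose Hilbert function is $n \mapsto \mu\!\left(\mathbf{F}(N_n)\right)$ by the Nakayama step, and the proof concludes by the final two paragraphs of the argument for \Cref{main theorem 1}: \Cref{HF is quasi polynomial} gives a quasi-polynomial of period dividing $\alpha = \mathrm{lcm}(\alpha_1,\ldots,\alpha_s)$ in general, and \Cref{poly} upgrades this to an eventual polynomial when $\mathcal{R}(\mathcal{I})$ is standard graded.
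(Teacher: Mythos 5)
Your overall route is the same as the paper's: reduce $\mu(\mathbf{F}(N_n))$ to $\dim_{\mathbb{K}}\bigl(\mathbf{F}(N_n)/\mathfrak{m}\mathbf{F}(N_n)\bigr)$ by Nakayama, realize $\bigoplus_{n\ge 0}\mathbf{F}(N_n)/\mathfrak{m}\mathbf{F}(N_n)$ as a graded module over the special fiber cone $\mathcal{F}(\mathcal{I})$, and conclude by \Cref{HF is quasi polynomial} and \Cref{poly}. You are in fact more explicit than the paper about the two points it leaves implicit: how $\widetilde{N}=\bigoplus_n\mathbf{F}(N_n)$ acquires its graded $\mathcal{R}(\mathcal{I})$-module structure (via $a\mapsto\mathbf{F}(\mu_a)$, using that commutation with direct sums forces additivity on morphisms), and why it should be finitely generated.

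However, your resolution of the finite-generation issue contains a genuine error. After correctly observing that $\mathbf{F}(\Phi)\colon\mathbf{F}(G)\to\widetilde{N}$ need not be surjective, you assert that Noetherianity of $\mathcal{R}(\mathcal{I})$ ensures the image of $\mathbf{F}(\Phi)$ is finitely generated, ``which is the property actually needed.'' It is not. The Hilbert function you must feed into \Cref{HF is quasi polynomial} is $n\mapsto\dim_{\mathbb{K}}\bigl(\mathbf{F}(N_n)/\mathfrak{m}\mathbf{F}(N_n)\bigr)=\mu(\mathbf{F}(N_n))$, i.e.\ the Hilbert function of $\widetilde{N}/\mathfrak{m}\widetilde{N}$ itself; if $\mathbf{F}(\Phi)$ is not surjective, finite generation of its image gives no control over $\widetilde{N}$, and the graded pieces of that image compute a different (smaller) function. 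The argument closes only when $\mathbf{F}$ preserves surjections (tensoring, localization), or when $\widetilde{N}$ is exhibited as an $\mathcal{R}(\mathcal{I})$-subquotient of a finitely generated module via $\mathcal{R}(\mathcal{I})$-equivariant maps --- as happens for $\mathrm{Tor}_i^R(M,-)$, a subquotient of $N^{\oplus b_i}$ --- in which case Noetherianity applies to $\widetilde{N}$ directly, not to $\mathrm{im}\,\mathbf{F}(\Phi)$. To be fair, the paper's own proof simply asserts the finite generation of $\bigoplus_n\mathbf{F}(N_n)/\mathfrak{m}\mathbf{F}(N_n)$ ``since $N$ is finitely generated'' with no further justification, so you have correctly isolated the one nontrivial step; you just have not closed it, and the sentence claiming the image suffices should be removed or replaced by an argument (or hypothesis) covering the intended examples.
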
 
\begin{proof} The proof goes in the same line of arguments as that of \Cref{main theorem 1}. Particularly, let $N_n=\dfrac{I_n+J_n}{I_n}$. Similar to what was done in \Cref{main theorem 1}, by considering 
$$\mathbf{F}(N_n)\otimes_R \dfrac{R}{\mathfrak{m}} \cong \dfrac{\mathbf{F}(N_n)}{\mathfrak{m}\mathbf{F}(N_n)},$$ 
and applying Nakayama's lemma, we get
$$\mu\left(\mathbf{F}\left(N_n\right)\right)=\dim_{\mathbb{K}}\left(\dfrac{\mathbf{F}(N_n)}{\mathfrak{m}\mathbf{F}(N_n)}\right).$$ 
  Since $N$ is a finitely generated module over $\mathcal{R}(\mathcal{I})$,  $\displaystyle\bigoplus_{n \geq 0}\left(\dfrac{\mathbf{F}(N_n)}{\mathfrak{m}\mathbf{F}(N_n)}\right) $ is a finitely generated graded module over the special fiber cone $\mathcal{F}(\mathcal{I})$. The conclusion now follows from Theorems \ref{HF is quasi polynomial} and \ref{poly}.
\end{proof}

\Cref{Theorem on Functors} gives the following corollaries.

\begin{cor}\label[cor]{cor.TorExt}
  Let $\mathcal{I}=\{I_i\}_{i \geq 0}$ and ${\mathcal{J}}=\{J_{i}\}_{i \geq 0}$ be filtrations of ideals in $R$. Suppose that the Rees algebra $\mathcal{R}(\mathcal{I})$ is Noetherian and the defect module $N=\displaystyle \bigoplus_{n \geq 0} \dfrac{I_n+J_n}{J_n}$  is a finitely generated module over $\mathcal{R}(\mathcal{I})$. Then, the following statements hold:
    \begin{enumerate}
        \item For any multiplicatively closed set $S$ of the ring $R$, $\mu\left(S^{-1}(N_n)\right)$ is eventually a quasi-polynomial in $n$. 
        \item For $R$-modules $M_1, M_2, \dots, M_k$ and $i_1, i_2, \dots, i_k \in \mathbb{N}$,
\[
\mu\!\left(
\mathrm{Tor}^R_{i_1}\Big(M_1,\ 
\mathrm{Tor}^R_{i_2}\big(M_2,\ 
\dots\, 
\mathrm{Tor}^R_{i_k}(M_k, N_n)
\dots
\big)
\Big)
\right)
\]
is eventually a quasi-polynomial in $n$.
    \end{enumerate}
    Furthermore, in these situations, the period of the quasi-polynomial is $\alpha=\mathrm{lcm}(\alpha_{1},\alpha_{2},\dots,\alpha_{s})$, where the numbers $\alpha_1,\alpha_2,\dots,\alpha_s$ are the degrees of the generators of $\reesI$. In addition, if $\mathcal{R}(\mathcal{I})$ is standard graded as an algebra over $R$ then these quasi-polynomials are in fact polynomials.
\end{cor}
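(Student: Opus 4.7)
The plan is to deduce both parts directly from \Cref{Theorem on Functors} by exhibiting each operation as a functor on finitely generated modules that commutes with direct sums, after which the quasi-polynomial conclusion, the stated period, and the polynomial refinement in the standard graded case are all immediate.

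For part (2), I would take $\mathbf{F}_{M,i} := \operatorname{Tor}^R_{i}(M,-)$ for each finitely generated $R$-module $M$ and $i \ge 0$. Since $R$ is Noetherian and $M$ is finitely generated, $M$ admits a resolution $P_\bullet \to M$ by finitely generated free modules, so $\operatorname{Tor}^R_i(M,-) = H_i(P_\bullet \otimes_R -)$ preserves finite generation. Commutation with direct sums is immediate: the tensor product $P_\bullet \otimes_R -$ is a left adjoint and homology of chain complexes commutes with direct sums, so
\[
\operatorname{Tor}^R_i\Bigl(M, \bigoplus_\lambda N_\lambda\Bigr) \;=\; H_i\Bigl(\bigoplus_\lambda (P_\bullet \otimes_R N_\lambda)\Bigr) \;=\; \bigoplus_\lambda \operatorname{Tor}^R_i(M, N_\lambda).
\]
Both properties are preserved under composition, so $\mathbf{F}_{M_1,i_1} \circ \cdots \circ \mathbf{F}_{M_k,i_k}$ is a valid input for \Cref{Theorem on Functors}, yielding (2) by an induction on $k$ (or equivalently a single application to the composed functor).

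For part (1), I would take $\mathbf{F} = S^{-1}(-)$. Localization is exact and commutes with arbitrary direct sums (being a left adjoint). The minor subtlety is that $S^{-1}N_n$ is naturally an $S^{-1}R$-module and $\mu(S^{-1}N_n)$ is measured over $S^{-1}R$, whereas \Cref{Theorem on Functors} is stated with $R$ as the ambient ring. I would handle this by running the proof of \Cref{Theorem on Functors} after base change: the graded ring $S^{-1}\mathcal{R}(\mathcal{I}) = \bigoplus_{n \ge 0} S^{-1}(I_n) t^n$ is Noetherian with $R$-algebra generators of the same degrees $\alpha_1, \dots, \alpha_s$, the graded module $\bigoplus_n S^{-1}N_n$ is finitely generated over it, and the Nakayama / special fiber cone / Hilbert--Serre steps (\Cref{HF is quasi polynomial} and \Cref{poly}) apply verbatim with $R$ replaced by $S^{-1}R$ and its (graded) maximal ideal. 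Standard gradedness of $\mathcal{R}(\mathcal{I})$ transfers to $S^{-1}\mathcal{R}(\mathcal{I})$, giving the polynomial refinement.

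The main obstacle is the bookkeeping in (1): one must verify that base change to $S^{-1}R$ preserves the Noetherian property, the finite-generation hypothesis, and — crucially — the generating degrees that govern the quasi-period. None of this is deep, but it is the only place where the black-box invocation of \Cref{Theorem on Functors} breaks down and one has to open up its proof. For (2) the only technical check is finite generation of Tor, which follows at once from the Noetherian assumption.
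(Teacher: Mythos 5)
Your proposal is correct and follows essentially the same route as the paper: both parts are obtained by feeding $S^{-1}(-)$ and $\operatorname{Tor}^R_{i}(M,-)$ into \Cref{Theorem on Functors}, using that these functors commute with direct sums (the paper's proof consists of exactly the two displayed isomorphisms you verify). If anything you are more careful than the paper on part (1): the published proof cites \Cref{Theorem on Functors} as a black box, whereas you correctly observe that $\mu(S^{-1}N_n)$ is computed over $S^{-1}R$ rather than $R$, so the special fiber cone and Nakayama steps must be rerun after base change --- a point the paper leaves implicit.
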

\begin{proof}
(1) The assertion follows directly from \Cref{Theorem on Functors}, noting that
$$S^{-1}\left(\bigoplus_{n \geq 0}\dfrac{I_n+J_n}{J_n}\right)=\bigoplus_{n \geq 0}S^{-1}\left(\dfrac{I_n+J_n}{J_n} \right).$$

(2) The conclusion is also a direct consequence of \Cref{Theorem on Functors}, due to the fact that
    $$\mathrm{Tor}_i^R\left(M,\displaystyle\bigoplus_{n \geq 0} \dfrac{I_n+J_n}{J_n}\right)\cong\displaystyle\bigoplus_{n \geq 0}\mathrm{Tor}_i^R\left(M, \dfrac{I_n+J_n}{J_n}\right)$$
for any $R$-module $M$ and any $i \in \mathbb{N}$.
\end{proof}

\begin{remark}
    We do not know if a similar statement to \Cref{cor.TorExt} holds for the Ext modules. The existence of such a statement depends on the commutativity between Ext and direct sums, which has been much investigated (see, for instance \cite{Breaz2013} and \cite{WhenEXT}).
\end{remark}

\section{Monomial ideals and leading coefficients of defect functions}
In this section we study the defect function for pairs of \textit{monomial filtrations} (i.e., filtrations in which every ideal is a monomial ideal) in the polynomial ring $R$. Our first goal is to compare the growth of the defect function with that of the defect arising from the integral-closure filtrations; see \Cref{growth of sdef vs idef}. This extends the corresponding result for symbolic defect obtained in \cite[Proposition~4.5]{BO}. We then establish our second main theorem, showing that for the filtrations $\mathcal{I}=\{I^n : J^\infty\}_{n \ge 0}$ and $\filJ=\{I^n\}_{n \ge 0}$, the leading coefficient of the eventual quasi-polynomial describing the defect is constant, and that, under a mild height condition on an equigenerated ideal $I$, the coefficient of the second-highest power of $n$ is also constant; see \Cref{theorem on coefficients}.




\begin{theorem}\label[theorem]{growth of sdef vs idef}
Let $R=\mathbb{K}[x_1,\dots,x_r]$ be a standard graded polynomial ring over a field $\mathbb{K}$. Let $\mathcal{I}=\{I_n\}_{n \geq 0}$ and $\filJ=\{J_n\}_{n \geq 0}$ be monomial filtrations in $R$ such that $I_n \subseteq J_n$ for all $n\geq 0$. 
\begin{enumerate}
    \item Set $T_n = \displaystyle \frac{\mathfrak{m}\overline{I}_n \cap \overline{J}_n}{\mathfrak{m}\overline{J}_n}$. If $\displaystyle\lim_{n \rightarrow \infty}\dfrac{\mu(\overline{I}_n)}{\dim_{\mathbb{K}}(T_n)-\mu(\overline{J}_n)}\neq 1$, then  $\displaystyle\lim_{n \rightarrow \infty } \frac{\defect(\mathcal{I},\filJ,n)}{\defect(\overline{\mathcal{I}},\overline{\mathcal{J}},n)} < \infty.$ 
    \item Set $S_n = \displaystyle \frac{I_n \cap \overline{J}_n}{J_n}$. If $\displaystyle\lim_{n \rightarrow \infty}\dfrac{\mu(S_n)}{\defect(\overline{\mathcal{I}},\overline{\mathcal{J}},n)} < \infty$, then $0< \displaystyle\lim_{n \rightarrow \infty}\frac{\defect(\mathcal{I},\filJ,n)}{\defect(\overline{\mathcal{I}},\overline{\mathcal{J}},n)} < \infty $.
    \end{enumerate}
\end{theorem}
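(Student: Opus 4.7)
The plan is to recast $\mu$ combinatorially and exploit the inclusion $G(I) \subseteq G(\overline{I})$ for any monomial ideal $I$, where $G(-)$ denotes the minimal monomial generating set; this holds because every minimal generator of $I$ is a minimal lattice point of its Newton polyhedron, and therefore remains minimal in $\overline{I}$. It yields the counting formulas $\defect(\mathcal{I},\filJ,n) = |G(I_n) \setminus J_n|$ and $\defect(\overline{\mathcal{I}},\overline{\filJ},n) = |G(\overline{I_n}) \setminus \overline{J_n}|$, together with the trivial bound $\defect(\mathcal{I},\filJ,n) \leq \mu(I_n) \leq \mu(\overline{I_n})$. These are the workhorses for both parts.

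For part (1), I would first establish the identity
\[
\defect(\overline{\mathcal{I}},\overline{\filJ},n) \;=\; \mu(\overline{I_n}) \;-\; \mu(\overline{J_n}) \;+\; \dim_{\mathbb{K}}(T_n)
\]
by chaining the short exact sequences $0 \to (\mathfrak{m}\overline{I_n}+\overline{J_n})/\mathfrak{m}\overline{I_n} \to \overline{I_n}/\mathfrak{m}\overline{I_n} \to \overline{I_n}/(\mathfrak{m}\overline{I_n}+\overline{J_n}) \to 0$ and $0 \to T_n \to \overline{J_n}/\mathfrak{m}\overline{J_n} \to \overline{J_n}/(\mathfrak{m}\overline{I_n} \cap \overline{J_n}) \to 0$, glued by the isomorphism $(\mathfrak{m}\overline{I_n}+\overline{J_n})/\mathfrak{m}\overline{I_n} \cong \overline{J_n}/(\mathfrak{m}\overline{I_n}\cap\overline{J_n})$ together with Nakayama's lemma. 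Rearranging, the quantity in the hypothesis satisfies $\mu(\overline{J_n}) - \dim_{\mathbb{K}}(T_n) = \mu(\overline{I_n}) - \defect(\overline{\mathcal{I}},\overline{\filJ},n)$, so the assumption that the corresponding ratio does not tend to $1$ forces $\defect(\overline{\mathcal{I}},\overline{\filJ},n)$ to grow at the same asymptotic rate as $\mu(\overline{I_n})$. Combined with $\defect(\mathcal{I},\filJ,n) \leq \mu(\overline{I_n})$, this bounds the ratio in (1).

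For the upper bound in part (2), the natural map $I_n/J_n \to \overline{I_n}/\overline{J_n}$ has kernel $S_n$ and image $(I_n+\overline{J_n})/\overline{J_n}$. Subadditivity of $\mu$ along the resulting short exact sequence, combined with the estimate $\mu((I_n+\overline{J_n})/\overline{J_n}) = |G(I_n)\setminus\overline{J_n}| \leq |G(\overline{I_n})\setminus\overline{J_n}| = \defect(\overline{\mathcal{I}},\overline{\filJ},n)$ coming from $G(I_n)\subseteq G(\overline{I_n})$, yields $\defect(\mathcal{I},\filJ,n) \leq \mu(S_n) + \defect(\overline{\mathcal{I}},\overline{\filJ},n)$, and the upper bound then follows from the hypothesis.

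The main obstacle is the strict positivity of the lower limit in part (2). Partitioning $G(I_n)\setminus J_n$ by membership in $\overline{J_n}$ and using $G(I_n)\subseteq G(\overline{I_n})$ produces the exact identity
\[
\defect(\overline{\mathcal{I}},\overline{\filJ},n) - \defect(\mathcal{I},\filJ,n) \;=\; |(G(\overline{I_n})\setminus G(I_n))\setminus \overline{J_n}| \;-\; |G(I_n)\cap\overline{J_n}\setminus J_n|.
\]
The subtracted term is at most $\mu(S_n)$, since elements of $G(I_n)\cap\overline{J_n}$ are automatically minimal in $I_n\cap\overline{J_n}$. The technically delicate step will be to control the first term on the right: each ``new'' vertex $g \in G(\overline{I_n})\setminus G(I_n)$ of the Newton polyhedron of $I_n$ lying outside $\overline{J_n}$ is a convex combination of vertices of $\mathrm{Newt}(I_n)$, at least one of which must also lie outside $\overline{J_n}$ (because $\mathrm{Newt}(J_n)$ is convex and upward closed), producing a map into $G(I_n)\setminus\overline{J_n}$ whose fibers can be bounded in terms of the dimension $r$ and the generating degrees of $\reesI$. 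Under the hypothesis on $\mu(S_n)$, this forces the first term to be strictly smaller than $\defect(\overline{\mathcal{I}},\overline{\filJ},n)$ asymptotically, delivering the positive lower limit.
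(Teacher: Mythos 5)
Your argument rests on the claim that $G(I) \subseteq G(\overline{I})$ for every monomial ideal $I$ (``every minimal generator of $I$ remains minimal in $\overline{I}$''), and this is false. Take $I = (x^2, xy^3, y^4) \subseteq \mathbb{K}[x,y]$: its Newton polyhedron is that of $(x^2,y^4)$, so $\overline{I} = (x^2, xy^2, y^4)$ and $xy^3 = y\cdot xy^2 \in \mathfrak{m}\overline{I}$; hence $xy^3 \in G(I)\setminus G(\overline{I})$. A minimal generator of $I$ is a corner of the staircase of $I$, not necessarily a minimal lattice point of the Newton polyhedron. This single error propagates through almost everything you build on it: the bound $\mu(I_n)\le\mu(\overline{I_n})$ used to close part (1), the estimate $|G(I_n)\setminus\overline{J_n}| \le |G(\overline{I_n})\setminus\overline{J_n}|$ in the upper bound of part (2), and the ``exact identity'' obtained by partitioning $G(I_n)$ inside $G(\overline{I_n})$ for the lower bound. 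The paper avoids this entirely: for part (1) it bounds $\defect(\mathcal{I},\filJ,n)$ by $\mu(I_n)$ and then invokes the external fact that $\mu(I_n)$ and $\mu(\overline{I}_n)$ both grow like $n^{\ell(\mathcal{I})-1}$, where $\ell(\mathcal{I})$ is the analytic spread; that growth-rate comparison is the real substitute for your (false) generator containment, and you cannot do without some input of this kind.

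There are two further problems. First, in part (1) the inference ``the ratio does not tend to $1$ forces $\defect(\overline{\mathcal{I}},\overline{\filJ},n)$ to grow at the same rate as $\mu(\overline{I_n})$'' is a non sequitur: writing $P(n)=\dim_{\mathbb{K}}T_n-\mu(\overline{J}_n)$, the hypothesis excludes one asymptotic regime of $\mu(\overline{I}_n)/P(n)$, but it does not exclude, say, $P(n)$ dominating $\mu(\overline{I}_n)$, nor does it by itself rule out cancellation in $\defect(\overline{\mathcal{I}},\overline{\filJ},n)=\mu(\overline{I}_n)+P(n)$; the hypothesis is only useful in combination with the degree comparison above, which is exactly how the paper deploys it. (Your derivation of the identity $\defect(\overline{\mathcal{I}},\overline{\filJ},n)=\mu(\overline{I}_n)-\mu(\overline{J}_n)+\dim_{\mathbb{K}}T_n$ itself is fine and matches the paper's, which gets it by tensoring one short exact sequence with $\mathbb{K}$.) Second, your lower-bound argument in part (2) is not a proof but a program: the ``technically delicate step'' of bounding $|(G(\overline{I_n})\setminus G(I_n))\setminus\overline{J_n}|$ via fibers of a map to $G(I_n)\setminus\overline{J_n}$ is both unexecuted and built on the false containment; moreover the proposed fiber bound ``in terms of $r$ and the generating degrees of $\reesI$'' would at best give a multiplicative constant, which is not obviously enough to make that term strictly smaller than $\defect(\overline{\mathcal{I}},\overline{\filJ},n)$. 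The paper's own route to part (2) is the snake lemma applied to the inclusion of the exact sequences for $(I_n,J_n)$ and $(\overline{I}_n,\overline{J}_n)$, yielding $\defect(\mathcal{I},\filJ,n)\le\mu(S_n)+\defect(\overline{\mathcal{I}},\overline{\filJ},n)$ directly; you should follow that, and treat the positivity of the limit separately rather than through Newton-polyhedron combinatorics.
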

\begin{proof}
    In the case that, for $n\gg 0$, $\defect(\mathcal{I},\filJ,n)=0$ or $\defect(\overline{\mathcal{I}},\overline{\mathcal{J}},n)=0$ the result is immediately true. Therefore, we may assume that both $\defect(\mathcal{I},\filJ,n)$ and $\defect(\overline{\mathcal{I}},\overline{\mathcal{J}},n)$ are nonzero for $n\gg 0$.
    
    To prove (1), consider the following short exact sequence :
    $$ 0 \longrightarrow \overline{J}_n \longrightarrow \overline{I}_n \longrightarrow \frac{\overline{I}_n+\overline{J}_n}{\overline{J}_n} \longrightarrow 0.$$
    Tensoring with $\mathbb{K}$ and applying Nakayama's lemma, we get 
    $$\mu(\overline{J}_n)+\ \defect(\overline{\mathcal{I}},\overline{\mathcal{J}},n)=\dim_{\mathbb{K}}T_n+\mu(\overline{I}_n).$$
That is, $\defect(\overline{\mathcal{I}},\overline{\mathcal{J}},n)=\dim_{\mathbb{K}}T_n+\mu(\overline{I}_n)-\mu(\overline{J}_n).$

Let $P(n)=\dim_{\mathbb{K}}T_n-\mu(\overline{J}_n)$ and $F_n= \displaystyle {\defect(\mathcal{I},\filJ,n)}\big/{\defect(\overline{\mathcal{I}},\overline{\mathcal{J}},n)}$ . Clearly, 
$$F_n \leq \dfrac{\mu(I_n)}{\mu(\overline{I}_n)+P(n)}.$$ 
It follows from \cite[Corollary~4.4]{HTH} that the rates of growth of $\mu(I_n)$ and $\mu(\overline{I}_n)$ are both $\ell(\mathcal{I})-1$, where $\ell(\mathcal{I})$ is the analytic spread of $\mathcal{I}$. 
Therefore, if $\deg(P(n))> \ell(\mathcal{I})-1$ then $F_n \rightarrow 0$ as $n \rightarrow \infty$. On the other hand, if $\deg(P(n)) \le \ell(\mathcal{I})-1$, then $F_n$ is bounded above by a finite number. In both cases, we have $\lim_{n \rightarrow \infty} F_n$ is finite.

To prove (2), we consider the following commutative diagram :
\[\begin{tikzcd}
&0\arrow[d,""]&0\arrow[d,""]& \\
0 \arrow[r,""] & J_n \arrow[r, ""] \arrow[d, ""]
& \overline{J_n} \arrow[r, "\phi"] \arrow[d,""] & \displaystyle \frac{\overline{J}_n}{J_n} \arrow[r, ""] \arrow[d, ""] & 0\\
0 \arrow[r,""] & I_n \arrow[r, ""] 
& \overline{I_n} \arrow[r, ""] & \displaystyle \frac{\overline{I}_n}{J_n} \arrow[r, ""] & 0.\\
\end{tikzcd}\]

Using the Snake Lemma, we get the following exact sequence:
$$0 \longrightarrow \dfrac{\overline{J}_n \cap I_n}{J_n}\longrightarrow \dfrac{I_n}{J_n}\longrightarrow \dfrac{\overline{I}_n}{\overline{J}_n}\longrightarrow C_n \longrightarrow 0$$
where $C_n$ denotes the cokernel of the map $\dfrac{I_n}{J_n}\longrightarrow \dfrac{\overline{I}_n}{\overline{J}_n}$. By tensoring with $\mathbb{K}$ and applying Nakayama's lemma, this gives
$$\defect(\mathcal{I},\filJ,n) \leq \defect(\overline{\mathcal{I}},\overline{\mathcal{\filJ}},n)+\mu\left(S_n\right).$$
It follows that $F_n \leq \dfrac{\mu(S_n)}{\defect(\overline{\mathcal{I}},\overline{\mathcal{J}},n)}+1$ and, hence, $$0< \displaystyle\lim_{n \rightarrow \infty}\frac{\defect(\mathcal{I},\filJ,n)}{\defect(\overline{\mathcal{I}},\overline{\mathcal{J}},n)} < \infty \text{ whenever } \displaystyle\lim_{n \rightarrow \infty}\dfrac{\mu(S_n)}{\defect(\overline{\mathcal{I}},\overline{\mathcal{J}},n)} < \infty.  \qedhere$$
\end{proof}

We now prove our second main result, which shows that the leading coefficient and the coefficient of the second highest power of $n$ in the defect function between $\mathcal{I}=I^n: J^\infty$ and $\filJ=\{I^n\}_{n \geq 0}$ is a constant. The proof of this result is inspired by that of \cite[Theorem~1.2]{TJP}. Before we begin the proof, recall that for a multiplicative filtration $\mathcal{F}=\{I_n\}_{n \geq 0}$ of homogeneous ideals with $I_0=R$ and $I \subseteq I_1$, $L^\mathcal{F}=\displaystyle\bigoplus _{n \geq 0}R/I_{n+1}$ has an $R[It]$-module structure (see \cite[2.3]{TJP} for details). This module structure arises as follows: using the fact that $\mathcal{R}(\mathcal{F)}=\displaystyle\bigoplus_{n \geq 0}I_nt^n$ is a graded $R[It]$-module and the short exact sequence
$$ 0 \longrightarrow \mathcal{R}(\mathcal{F})\longrightarrow R[t] \longrightarrow L^\mathcal{F}(-1) \longrightarrow 0,$$ we get an $R[It]$-module structure on $L^\mathcal{F}(-1)$, and hence of $L^\mathcal{F}$.
\begin{theorem}\label[theorem]{theorem on coefficients}
  Let $R=\mathbb{K}[x_1,\dots,x_r]$ be a standard graded polynomial ring. Let $I$ and $J$ be  monomial ideals. Consider two filtrations $\mathcal{I}=\{I^n: J^\infty\}_{n \geq 0}$ and $\filJ=\{I^n\}_{n \geq0}$ of ideals in $R$. Assume that the Rees algebra $\mathcal{R}(\mathcal{I})$ is Noetherian and the defect module 
$$N=\displaystyle \bigoplus_{n \geq 0} \dfrac{I_n+J_n}{J_n}$$  
is a finitely generated module over $\mathcal{R}(\mathcal{I})$. Suppose further that the quasi-polynomial (by \Cref{main theorem 1}) defect function of $(\mathcal{I},\filJ)$ is of the form 
$$\defect(\mathcal{I},\filJ,n) = a_c(n)n^c+a_{c-1}(n)n^{c-1}+ \text{ lower order terms},$$ 
for $n\gg 0$, where $c$ is the degree. Then, the following statements hold:
  \begin{enumerate}
      \item The coefficient $a_c(n)$ is constant.
      \item If $I$ is generated by monomials in the same degree and $\h(I) \geq 2$, then the non-zero values of $a_{c-1}(n)$ are constant.   
  \end{enumerate}     
\end{theorem}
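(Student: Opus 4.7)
The plan is to follow the strategy of \cite{TJP}, exploiting the $R[It]$-module structure on the defect module. The key observation is that although $\mathcal{R}(\mathcal{I})$ may fail to be standard graded, the subalgebra $R[It]\subseteq \mathcal{R}(\mathcal{I})$ is always standard graded over $R$, and this standardness will be the source of the constancy of the top coefficients in the quasi-polynomial produced by \Cref{main theorem 1}.

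First, I set $L^\filJ=\bigoplus_{n\ge 0}R/I^{n+1}$ and $L^\mathcal{I}=\bigoplus_{n\ge 0}R/(I^{n+1}:J^\infty)$, both endowed with their natural graded $R[It]$-module structures as described in the paragraph preceding the theorem. The surjection $L^\filJ\twoheadrightarrow L^\mathcal{I}$ induced by the inclusions $I^{n+1}\subseteq I^{n+1}:J^\infty$ yields a short exact sequence of graded $R[It]$-modules
$$0\to N(-1)\to L^\filJ\to L^\mathcal{I}\to 0,$$
placing the defect module $N$ (with a degree shift) inside a sequence built entirely from standard-graded data.

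For part (1), the plan is to compute $\mu(N_n)=\dim_\mathbb{K}(N_n/\mathfrak{m}N_n)$ by controlling the bigraded Hilbert functions $H(n,d):=\dim_\mathbb{K}(R/I^{n+1})_d$ and $H'(n,d):=\dim_\mathbb{K}(R/(I^{n+1}:J^\infty))_d$. Both become polynomial in $(n,d)$ for $n,d\gg 0$ by the Noetherianity of the relevant Rees algebras together with a bigraded Hilbert--Serre argument applied to a suitable standard graded subalgebra. I then express $\mu(N_n)$ through a Koszul--Euler alternating sum that extracts $\mathrm{Tor}_0^R(\mathbb{K},N_n)$, obtaining $\defect(\mathcal{I},\filJ,n)$ as a finite alternating sum of bigraded polynomial quantities, up to terms supported on a bounded range of $d$, which contribute only lower-order terms in $n$. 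Since the leading coefficient of any genuine polynomial is constant, this yields $(1)$.

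For part (2), the equigenerated hypothesis ensures that $R[It]$ is naturally bigraded with $I$ concentrated in a single internal degree, which gives additional rigidity to the Koszul--Euler expression at the next order; the subleading coefficient in $n$ is then again extracted from a genuine polynomial. The assumption $\h(I)\ge 2$ rules out degenerate configurations (for instance, where $I$ sits inside a single coordinate hyperplane) in which the polynomial extraction at order $n^{c-1}$ could be contaminated by residue-class dependent contributions coming from boundary $d$. The main obstacle will be the bookkeeping in parts (1)--(2): one must control the discrepancy between the Koszul--Euler sum and the relevant bigraded polynomial expressions, and verify that the error term is of order $O(n^{c-1})$ (respectively $O(n^{c-2})$ under the hypotheses of (2)) so that it does not disturb the leading (respectively subleading) coefficients of the defect quasi-polynomial.
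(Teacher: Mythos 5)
Your setup (the modules $L^{\mathcal I}$, $L^{\filJ}$ and the short exact sequence exhibiting the defect module as the kernel of $L^{\filJ}\twoheadrightarrow L^{\mathcal I}$) matches the paper's, up to a sign in the shift: the kernel in degree $n$ is $(I^{n+1}:J^\infty)/I^{n+1}=N_{n+1}$, so it is a positive, not negative, twist of $N$. But from there the argument does not go through. For part (1), two steps fail. First, $\mu(N_n)=\dim_\K\operatorname{Tor}_0^R(\K,N_n)$ is not an Euler characteristic: the Koszul alternating sum computes $\sum_i(-1)^i\dim_\K\operatorname{Tor}_i^R(\K,N_n)_d$, and there is no way to isolate the $i=0$ term without controlling all the higher Koszul homology of $N_n$, which you do not do. Second, the bigraded Hilbert function $\dim_\K\bigl(R/(I^{n+1}:J^\infty)\bigr)_d$ is not eventually polynomial in $n$, precisely because $\mathcal{R}(\mathcal{I})$ is not standard graded in the Rees direction; it is only quasi-polynomial, which is the whole difficulty, and the ``suitable standard graded subalgebra'' is never identified ($N$ being finitely generated over $\mathcal{R}(\mathcal{I})$ does not make it finitely generated over $R[It]$). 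The paper's part (1) is instead obtained by citing a result of [JKV] on the top Hilbert coefficient of a finitely generated graded module over such a Rees algebra; some input of this kind is unavoidable, since for general non-standard gradings the top coefficient of a Hilbert quasi-polynomial need not be constant.

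For part (2) the proposal contains no actual mechanism. The paper's argument is: equigeneration of $I$ yields a homogeneous element $u$ superficial for both filtrations; multiplication by $ut$ gives, for $n\gg 0$, an exact sequence $0\to N_{n-1}\to N_n\to \overline{N}_n\to 0$, where $\overline{N}_n$ is the graded piece of the defect module of the induced filtrations in $S=R/(u)$ (here $\h(I)\ge 2$ is used to guarantee $\mathrm{grade}(IS)\ge 1$ so that the reduction modulo $u$ behaves); hence the first difference $\defect(\mathcal I,\filJ,n)-\defect(\mathcal I,\filJ,n-1)$ is bounded by a quasi-polynomial of degree $c-1$ whose leading coefficient is constant by part (1) applied in $S$, and periodicity of $a_{c-1}$ then forces its nonzero values to be constant. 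Your appeal to ``additional rigidity of the Koszul--Euler expression'' and to $\h(I)\ge 2$ ``ruling out degenerate configurations'' does not substitute for this: you never produce the superficial element or the first-difference comparison, and you would still need to show that the deferred boundary terms are $O(n^{c-2})$, a bound that is neither established nor plausible from the stated ingredients.
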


\begin{proof}
(1) The assertion is a direct application of  \cite[Theorem~2.4]{JKV} on the $\mathcal{R}(\mathcal{I})$-module $N=\displaystyle\bigoplus_{n \geq 0}\dfrac{I_n+J_n}{J_n}$, where $I_n = I^n : J^\infty$. 
 
(2) Suppose that the quasi-polynomial of $\defect(\mathcal{I},\filJ,n)$ has degree $c$ and period $\alpha$ (it is worthwhile to note that, by \Cref{HF is quasi polynomial}, the polynomials appearing in the asymptotic quasi-polynomial of $\defect(\mathcal{I},\filJ)$ have the same degree $c$). 
 Let $  \displaystyle L^{\mathcal{I}}=\bigoplus_{n \geq 0} \dfrac{R}{I^{n+1}:J^\infty} \text{ and } L^\filJ= \bigoplus_{n \geq 0} \dfrac{R}{I^{n+1}}$. Then, with $W=\displaystyle\bigoplus_{n \geq 0} \frac{I^{n+1}:J^\infty}{I^{n+1}}$, we have the following s.e.s of $R[It]$-modules
  \begin{equation}\label{SES1}
      0 \longrightarrow W \longrightarrow L^{\filJ} \longrightarrow L^{\mathcal{I}}\rightarrow 0.
   \end{equation}
   
   Using the fact that $I$ is a monomial ideal generated by monomials in same degree, there exists a homogeneous $I$-superficial element $u$ which is also superficial for the filtration $\mathcal{I}$ (see  \cite[\textit{Proof of 1.2.}]{TJP} for details). Let $p=ut$, then by \cite{TJP} the kernel of the map $L^{\mathcal{I}}(-1) \xrightarrow{\cdot p} L^{\mathcal{I}}$ is concentrated in degrees less than or equal to $n_0 \in \mathbb{N}$, such that $(I^{n+1}:u)=I^n$ for all $n \geq n_0$. This gives the following short exact sequence: 
   \begin{equation}\label{SES2}
       0 \longrightarrow \left(\dfrac{W}{pW}\right)_{n \geq n_0+1} \longrightarrow  \left(\dfrac{L^{\filJ}}{pL^{\filJ}}\right)_{n \geq n_0+1} \longrightarrow  \left(\dfrac{L^{\mathcal{I}}}{pL^{\mathcal{I}}}\right)_{n \geq n_0+1} \longrightarrow  0.
   \end{equation}  
   
  Consider the standard graded $\mathbb{K}$ algebra $S=R/(u)$, the $S$-ideals $I'=I/(u)$ and $J' = \dfrac{J+(u)}{(u)}$, and the Noetherian $S$-filtration $\mathcal{I}'=\left\{ \dfrac{I_n+(u)}{(u)}\right\}_{n \geq 0}$. 
  Note that, $\mathrm{grade}(I)=\h(I) \geq 2$ which gives $\mathrm{grade}(I') \geq 1$. Let $\filJ' = \left\{\dfrac{I^n+(u)}{(u)}\right\}_{n \ge 0}$. Using \cite{JKV}, we have  $\dim_{\mathbb{K}}\left({(I^n:J^\infty)}/{I^n}\right)$ and $\dim_{\mathbb{K}}\left(\dfrac{(I^n:J^\infty)+(u)}{I^n+(u)}\right)$ are constants for $n \gg 0$, we denote these constants by $r$ and $s$ respectively. Furthermore, by \Cref{quasi-poly}, $\defect(\mathcal{I}',\filJ',n)$ is eventually a quasi-polynomial of period $\alpha'$ and degree $l$, i.e.,
  $$\defect(\mathcal{I}',\filJ',n) = b_l(n)n^l+ \text{ lower order terms. }$$ By (1), $b_l(n)$ is a constant $b$ for all values of $n$. 
  
  For $n \geq n_0+1$, consider the multiplication map:
  $$f: \dfrac{I^{n-1}:J^\infty}{I^{n-1}} \longrightarrow \dfrac{I^n: J^\infty }{I^n},$$ defined by $f(x+I^{n-1})=ux+I^n$. We first compute the kernel of the map $f$. An element $x+I^{n-1} \in \mathrm{Ker}(f)$ if and only if $ux \in I^n$ and hence, we have:
  $$\mathrm{Ker}(f)=\dfrac{(I^{n-1}: J^\infty) \cap (I^n:u) }{I^{n-1}}.$$ Since $u$ is an $I$-superficial element and $n \geq n_0+1$, we have $I^n:u=I^{n-1}$. Therefore, $\mathrm{Ker}(f)=0$. Note that the image of the map $f$ is given by:
  $$ \mathrm{Im}(f)=\dfrac{u(I^{n-1}:J^{\infty})+I^n}{I^n}.$$
  Consequently, $\mathrm{Coker}(f)=\dfrac{I^n:J^\infty}{u(I^{n-1}:J^\infty)+I^n} \cong \dfrac{(I^n: J^\infty)+(u)}{I^n+(u)}$ (from (\ref{SES1}) and (\ref{SES2})). Combining the above, we get the following short exact sequence for all $n \gg 0$: 
$$0 \longrightarrow \frac{I^{n-1}:J^\infty}{I^{n-1}}\longrightarrow \frac{I^n : J^\infty}{I^n} \longrightarrow \frac{(I')^n : (J')^\infty}{(I')^n} \longrightarrow 0.$$ 
Therefore, $r \geq s$. If $r>s$, then $c=0$ and there is nothing to prove. 

Now, suppose that $r=s$. We have $l=c-1$. Finally, since $\defect(\mathcal{I},\filJ,n)-\defect(\mathcal{I},\filJ,n-1)\leq \defect(\mathcal{I}',\filJ',n)$ we get the following expression:
$$ac+a_{c-1}(n)-a_{c-1}(n-1) \leq b \text{ for all }n\gg 0,$$
where $a$ is the constant highest degree coefficient in the quasi-polynomial $\defect(\mathcal{I},\filJ,n)$ for $n\gg 0$. Hence, for $n\gg 0$, we get $a_{c-1}(n)\leq \gamma n + \lambda$, for some constants $\gamma$ and $\lambda$ (see \cite[Theorem~4.1.2.]{bruns1998cohen}). Suppose that $a_{c-1}(n)$ is non-zero, then $a_{c-1}(n)$ is a periodic function of $n$. It follows that $\gamma=0$, and so $a_{c-1}(n)$ is a constant for sufficiently large values of $n$.
\end{proof}

Example \ref{Example1} demonstrates that the expression for $a_{c-1}(n)$ in \Cref{theorem on coefficients} may be zero for certain values of $n \gg 0$. However, the following example illustrates \Cref{theorem on coefficients} in exhibiting that non-zero values of $a_{c-1}(n)$ are constant. 

\begin{eg}[{\cite[Example~4.11]{MM}}] \label[eg]{Example2}
  Let $I = (x_1x_2, x_2x_3,x_3x_4,x_4x_5,x_5x_1) \subseteq \mathbb{K}[x_1, \dots, x_5]$. Then, for the filtrations $\mathcal{I}=\{I^{(n)}\}_{n \geq 0}$ and $\filJ=\{I^n\}_{n \geq 0}$, their defect function is  
  \[
\defect(\mathcal{I},\mathcal{J}, n) =
\begin{cases}
\frac{15}{2}k^3 - \frac{15}{2}k^2 + 1 & \text{if } n = 3k, \\[1em]
\frac{15}{2}k^3 - \frac{5}{2}k & \text{if } n= 3k + 1, \\[1em]
\frac{15}{2}k^3 + \frac{15}{2}k^2 & \text{if } n = 3k + 2.
\end{cases}
\]
\end{eg}

We end the paper with an example which shows that $I$ is equigenerated in \Cref{theorem on coefficients} is necessary for the non-zero values of $a_{c-1}(n)$ to be constant. 

\begin{eg}[{\cite[Example~6.12]{BCMM}}] \label[eg]{Example3}
   Let $I=(xyz,x^2z,y^3x,z^4y) \subseteq \mathbb{K}[x,y,z]$. Then for the filtrations $\mathcal{I}=\{I^{(n)}\}_{n \geq 0}$ and $\filJ=\{I^n\}_{n \geq 0}$, their defect function is
   $$\defect(\mathcal{I},\filJ,n)=\begin{cases}
       0, \ \ \ n=1\\[0.5em]
       \dfrac{11}{5}n-2, \ \ \ n\equiv 0 \ (\text{mod } 5)\\[0.5em]
        \dfrac{11}{5}n-\dfrac{6}{5},\ \ \ n\equiv 1 \ (\text{mod } 5)\\[0.5em]
         \dfrac{11}{5}n-\dfrac{7}{5},\ \ \ n\equiv 2 \ (\text{mod } 5)\\[0.5em]
          \dfrac{11}{5}n-\dfrac{3}{5},\ \ \ n\equiv 3 \ (\text{mod } 5)\\[0.5em]
           \dfrac{11}{5}n-\dfrac{4}{5},\ \ \ n\equiv 4 \ (\text{mod } 5)
   \end{cases}$$
\end{eg}


\vspace*{1cm}

\subsection*{Acknowledgements.}  The first author is supported by the Indian Institute of Technology Kharagpur startup grant. The
second author is partially supported by a Simons Foundation grant.
 The third  author is thankful to the Government of India for supporting him in this work through the Prime Minister's Research Fellowship. A part of this project was done while the third author visited the second author at Tulane University. The authors thank Tulane University for its hospitality. The authors acknowledge the use of the computer algebra system Macaulay2 \cite{M2} for testing their computations.
\bibliographystyle{abbrv}
\bibliography{refs}

@article {DHHT,
    AUTHOR = {Dung, Le Xuan and Hien, Truong Thi and Nguyen, Hop D. and
              Trung, Tran Nam},
     TITLE = {Regularity and {K}oszul property of symbolic powers of
              monomial ideals},
   JOURNAL = {Math. Z.},
  FJOURNAL = {Mathematische Zeitschrift},
    VOLUME = {298},
      YEAR = {2021},
    NUMBER = {3-4},
     PAGES = {1487--1522},
      ISSN = {0025-5874,1432-1823},
   MRCLASS = {13D02 (05C90 05E40 05E45)},
  MRNUMBER = {4282136},
MRREVIEWER = {Aryampilly\ V.\ Jayanthan},
       DOI = {10.1007/s00209-020-02657-8},
       URL = {https://doi.org/10.1007/s00209-020-02657-8},
}

@misc{DO2025,
      title={Newton Polytopes and Analytic Spread}, 
      author={Benjamin Drabkin and Benjamin Oltsik},
      year={2025},
      eprint={2503.00671},
      archivePrefix={arXiv},
      primaryClass={math.AC},
      url={https://arxiv.org/abs/2503.00671}, 
}

@article {D+,
    AUTHOR = {Camarneiro, J and Drabkin, Ben and Fragoso, Duarte and
              Frendreiss, William and Hoffman, Daniel and Seceleanu,
              Alexandra and Tang, Tingting and Yang, Sewon},
     TITLE = {Convex bodies and asymptotic invariants for powers of monomial
              ideals},
   JOURNAL = {J. Pure Appl. Algebra},
  FJOURNAL = {Journal of Pure and Applied Algebra},
    VOLUME = {226},
      YEAR = {2022},
    NUMBER = {10},
     PAGES = {Paper No. 107089, 21},
      ISSN = {0022-4049,1873-1376},
   MRCLASS = {13F55 (13F20 14M25 52B20)},
  MRNUMBER = {4397130},
MRREVIEWER = {Mitsuhiro\ Miyazaki},
       DOI = {10.1016/j.jpaa.2022.107089},
       URL = {https://doi.org/10.1016/j.jpaa.2022.107089},
}

@article {HoaTrung,
    AUTHOR = {Ng\^o Vi\^et Trung and L\^e Tuan Hoa},
     TITLE = {Affine semigroups and {C}ohen-{M}acaulay rings generated by
              monomials},
   JOURNAL = {Trans. Amer. Math. Soc.},
  FJOURNAL = {Transactions of the American Mathematical Society},
    VOLUME = {298},
      YEAR = {1986},
    NUMBER = {1},
     PAGES = {145--167},
      ISSN = {0002-9947,1088-6850},
   MRCLASS = {13H10 (14M05)},
  MRNUMBER = {857437},
MRREVIEWER = {Tadeusz\ J\'ozefiak},
       DOI = {10.2307/2000613},
       URL = {https://doi.org/10.2307/2000613},
}

@book {BG2009,
    AUTHOR = {Bruns, Winfried and Gubeladze, Joseph},
     TITLE = {Polytopes, rings, and {$K$}-theory},
    SERIES = {Springer Monographs in Mathematics},
 PUBLISHER = {Springer, Dordrecht},
      YEAR = {2009},
     PAGES = {xiv+461},
      ISBN = {978-0-387-76355-2},
   MRCLASS = {19-02 (11H06 13F45 14M25 52-01 52B20)},
  MRNUMBER = {2508056},
MRREVIEWER = {T.\ Oda},
       DOI = {10.1007/b105283},
       URL = {https://doi.org/10.1007/b105283},
}

@book {HerzogHibi,
    AUTHOR = {Herzog, J\"urgen and Hibi, Takayuki},
     TITLE = {Monomial ideals},
    SERIES = {Graduate Texts in Mathematics},
    VOLUME = {260},
 PUBLISHER = {Springer-Verlag London, Ltd., London},
      YEAR = {2011},
     PAGES = {xvi+305},
      ISBN = {978-0-85729-105-9},
   MRCLASS = {13D02 (05E40 13D40 13F55 13P10)},
  MRNUMBER = {2724673},
MRREVIEWER = {Rahim\ Zaare-Nahandi},
       DOI = {10.1007/978-0-85729-106-6},
       URL = {https://doi.org/10.1007/978-0-85729-106-6},
}

@article {GHVT,
    AUTHOR = {Guardo, Elena and Harbourne, Brian and Van Tuyl, Adam},
     TITLE = {Asymptotic resurgences for ideals of positive dimensional
              subschemes of projective space},
   JOURNAL = {Adv. Math.},
  FJOURNAL = {Advances in Mathematics},
    VOLUME = {246},
      YEAR = {2013},
     PAGES = {114--127},
      ISSN = {0001-8708,1090-2082},
   MRCLASS = {14C20 (13A15 13F20)},
  MRNUMBER = {3091802},
MRREVIEWER = {Ciro\ Ciliberto},
       DOI = {10.1016/j.aim.2013.05.027},
       URL = {https://doi.org/10.1016/j.aim.2013.05.027},
}

@article {DFMS,
    AUTHOR = {DiPasquale, Michael and Francisco, Christopher A. and Mermin,
              Jeffrey and Schweig, Jay},
     TITLE = {Asymptotic resurgence via integral closures},
   JOURNAL = {Trans. Amer. Math. Soc.},
  FJOURNAL = {Transactions of the American Mathematical Society},
    VOLUME = {372},
      YEAR = {2019},
    NUMBER = {9},
     PAGES = {6655--6676},
      ISSN = {0002-9947,1088-6850},
   MRCLASS = {14C20 (13A18 13B22 13F20)},
  MRNUMBER = {4024534},
MRREVIEWER = {Carlos\ Galindo},
       DOI = {10.1090/tran/7835},
       URL = {https://doi.org/10.1090/tran/7835},
}

@article {FG2025,
    AUTHOR = {Fiorindo, Luca and Ghosh, Dipankar},
     TITLE = {On the asymptotic behavior of the {V}asconcelos invariant for
              graded modules},
   JOURNAL = {Nagoya Math. J.},
  FJOURNAL = {Nagoya Mathematical Journal},
    VOLUME = {258},
      YEAR = {2025},
     PAGES = {296--310},
      ISSN = {0027-7630,2152-6842},
   MRCLASS = {13A02 (13A15 13A30 13D45)},
  MRNUMBER = {4943018},
       DOI = {10.1017/nmj.2024.33},
       URL = {https://doi.org/10.1017/nmj.2024.33},
}

@article {NguyenTrung,
    AUTHOR = {Nguyen, Hop Dang and Trung, Ngo Viet},
     TITLE = {Depth functions of symbolic powers of homogeneous ideals},
   JOURNAL = {Invent. Math.},
  FJOURNAL = {Inventiones Mathematicae},
    VOLUME = {218},
      YEAR = {2019},
    NUMBER = {3},
     PAGES = {779--827},
      ISSN = {0020-9910,1432-1297},
   MRCLASS = {13C15 (13F20)},
  MRNUMBER = {4022079},
MRREVIEWER = {Yu\ Xie},
       DOI = {10.1007/s00222-019-00897-y},
       URL = {https://doi.org/10.1007/s00222-019-00897-y},
}

@misc{HKNN,
      title={Resurgence number of graded families of ideals}, 
      author={Huy T\`ai  H\`a and Arvind Kumar and Hop D. Nguyen and Thai Thanh Nguyen},
      year={2023},
      eprint={2308.16410},
      archivePrefix={arXiv},
      primaryClass={math.AC},
      url={https://arxiv.org/abs/2308.16410}, 
}

@article {TW,
    AUTHOR = {Trung, Ng\^o{} Vi\^et and Wang, Hsin-Ju},
     TITLE = {On the asymptotic linearity of {C}astelnuovo-{M}umford
              regularity},
   JOURNAL = {J. Pure Appl. Algebra},
  FJOURNAL = {Journal of Pure and Applied Algebra},
    VOLUME = {201},
      YEAR = {2005},
    NUMBER = {1-3},
     PAGES = {42--48},
      ISSN = {0022-4049,1873-1376},
   MRCLASS = {13D45},
  MRNUMBER = {2158746},
MRREVIEWER = {Isabel\ Bermejo},
       DOI = {10.1016/j.jpaa.2004.12.043},
       URL = {https://doi.org/10.1016/j.jpaa.2004.12.043},
}

@article {Kodiyalam,
    AUTHOR = {Kodiyalam, Vijay},
     TITLE = {Asymptotic behaviour of {C}astelnuovo-{M}umford regularity},
   JOURNAL = {Proc. Amer. Math. Soc.},
  FJOURNAL = {Proceedings of the American Mathematical Society},
    VOLUME = {128},
      YEAR = {2000},
    NUMBER = {2},
     PAGES = {407--411},
      ISSN = {0002-9939,1088-6826},
   MRCLASS = {13D45 (14B15)},
  MRNUMBER = {1621961},
MRREVIEWER = {P.\ Schenzel},
       DOI = {10.1090/S0002-9939-99-05020-0},
       URL = {https://doi.org/10.1090/S0002-9939-99-05020-0},
}

@article {GHM,
    AUTHOR = {Geramita, A. V. and Harbourne, B. and Migliore, J.},
     TITLE = {Star configurations in {$\Bbb{P}^n$}},
   JOURNAL = {J. Algebra},
  FJOURNAL = {Journal of Algebra},
    VOLUME = {376},
      YEAR = {2013},
     PAGES = {279--299},
      ISSN = {0021-8693,1090-266X},
   MRCLASS = {14N20 (13D02 13D40)},
  MRNUMBER = {3003727},
MRREVIEWER = {Enrico\ Carlini},
       DOI = {10.1016/j.jalgebra.2012.11.034},
       URL = {https://doi.org/10.1016/j.jalgebra.2012.11.034},
}

@article {DHST,
    AUTHOR = {Dumnicki, Marcin and Harbourne, Brian and Szemberg, Tomasz and
              Tutaj-Gasi\'nska, Halszka},
     TITLE = {Linear subspaces, symbolic powers and {N}agata type
              conjectures},
   JOURNAL = {Adv. Math.},
  FJOURNAL = {Advances in Mathematics},
    VOLUME = {252},
      YEAR = {2014},
     PAGES = {471--491},
      ISSN = {0001-8708,1090-2082},
   MRCLASS = {13A02 (13A15 13F20 13P99 14C20 14N20 14Q99)},
  MRNUMBER = {3144238},
MRREVIEWER = {Elena\ Guardo},
       DOI = {10.1016/j.aim.2013.10.029},
       URL = {https://doi.org/10.1016/j.aim.2013.10.029},
}

@article {BH,
    AUTHOR = {Bocci, Cristiano and Harbourne, Brian},
     TITLE = {Comparing powers and symbolic powers of ideals},
   JOURNAL = {J. Algebraic Geom.},
  FJOURNAL = {Journal of Algebraic Geometry},
    VOLUME = {19},
      YEAR = {2010},
    NUMBER = {3},
     PAGES = {399--417},
      ISSN = {1056-3911,1534-7486},
   MRCLASS = {13F20 (13A15)},
  MRNUMBER = {2629595},
MRREVIEWER = {Irena\ Swanson},
       DOI = {10.1090/S1056-3911-09-00530-X},
       URL = {https://doi.org/10.1090/S1056-3911-09-00530-X},
}

@article {HH,
    AUTHOR = {Hochster, Melvin and Huneke, Craig},
     TITLE = {Comparison of symbolic and ordinary powers of ideals},
   JOURNAL = {Invent. Math.},
  FJOURNAL = {Inventiones Mathematicae},
    VOLUME = {147},
      YEAR = {2002},
    NUMBER = {2},
     PAGES = {349--369},
      ISSN = {0020-9910,1432-1297},
   MRCLASS = {13A10 (13H05)},
  MRNUMBER = {1881923},
MRREVIEWER = {Irena\ Swanson},
       DOI = {10.1007/s002220100176},
       URL = {https://doi.org/10.1007/s002220100176},
}

@article {ELS,
    AUTHOR = {Ein, Lawrence and Lazarsfeld, Robert and Smith, Karen E.},
     TITLE = {Uniform bounds and symbolic powers on smooth varieties},
   JOURNAL = {Invent. Math.},
  FJOURNAL = {Inventiones Mathematicae},
    VOLUME = {144},
      YEAR = {2001},
    NUMBER = {2},
     PAGES = {241--252},
      ISSN = {0020-9910,1432-1297},
   MRCLASS = {13A10 (13H05 14Q20)},
  MRNUMBER = {1826369},
MRREVIEWER = {Irena\ Swanson},
       DOI = {10.1007/s002220100121},
       URL = {https://doi.org/10.1007/s002220100121},
}

@article{cutkosky1999asymptotic,
  title={Asymptotic behaviour of the Castelnuovo-Mumford regularity},
  author={Cutkosky, S Dale and Herzog, J{\"u}rgen and Trung, Ng{\^o} Vi{\^e}t},
  journal={Compositio Mathematica},
  volume={118},
  number={3},
  pages={243--261},
  year={1999},
  publisher={London Mathematical Society}
}

@article{conca2024note,
  title={A note on the v-invariant},
  author={Conca, Aldo},
  journal={Proceedings of the American Mathematical Society},
  volume={152},
  number={06},
  pages={2349--2351},
  year={2024}
}

@article{galetto2019symbolic,
  title={The symbolic defect of an ideal},
  author={Galetto, Federico and Geramita, Anthony V and Shin, Yong-Su and Van Tuyl, Adam},
  journal={Journal of Pure and Applied Algebra},
  volume={223},
  number={6},
  pages={2709--2731},
  year={2019},
  publisher={Elsevier}
}

@book{stanley2012enumerative,
  title={Enumerative Combinatorics. Richard P. Stanley},
  author={Stanley, Richard P},
  year={2012},
  publisher={Cambridge University Press}
}

@article {BD,
    AUTHOR = {Drabkin, Benjamin and Guerrieri, Lorenzo},
     TITLE = {Asymptotic invariants of ideals with {N}oetherian symbolic
              {R}ees algebra and applications to cover ideals},
   JOURNAL = {J. Pure Appl. Algebra},
  FJOURNAL = {Journal of Pure and Applied Algebra},
    VOLUME = {224},
      YEAR = {2020},
    NUMBER = {1},
     PAGES = {300--319},
      ISSN = {0022-4049,1873-1376},
   MRCLASS = {13F20 (05C25)},
  MRNUMBER = {3986423},
MRREVIEWER = {Anil\ Shantaram\ Khairnar},
       DOI = {10.1016/j.jpaa.2019.05.008},
       URL = {https://doi.org/10.1016/j.jpaa.2019.05.008},
}

@article {BO,
    AUTHOR = {Oltsik, Benjamin},
     TITLE = {Symbolic defect of monomial ideals},
   JOURNAL = {Comm. Algebra},
  FJOURNAL = {Communications in Algebra},
    VOLUME = {52},
      YEAR = {2024},
    NUMBER = {9},
     PAGES = {3996--4012},
      ISSN = {0092-7872,1532-4125},
   MRCLASS = {13A02 (13E05 13H15)},
  MRNUMBER = {4772764},
       DOI = {10.1080/00927872.2024.2338215},
       URL = {https://doi.org/10.1080/00927872.2024.2338215},
}

@article{HTH,
  author  = {Huy T\`ai H\`a and Thái Thành Nguyen},
  title   = {Newton–Okounkov body, Rees algebra, and analytic spread of graded families of monomial ideals},
  journal = {Transactions of the American Mathematical Society. Series B},
  volume  = {11},
  year    = {2024},
  pages   = {1065--1097},
  issn    = {2330-0000},
  mrclass = {13A30 (05E16 05E40)},
  mrnumber= {4806407},
  mrreviewer = {Yi-Huang Shen},
  doi     = {10.1090/btran/177},
  url     = {https://doi.org/10.1090/btran/177},
}

@article {JKV,
    AUTHOR = {Herzog, J\"urgen and Puthenpurakal, Tony J. and Verma, Jugal
              K.},
     TITLE = {Hilbert polynomials and powers of ideals},
   JOURNAL = {Math. Proc. Cambridge Philos. Soc.},
  FJOURNAL = {Mathematical Proceedings of the Cambridge Philosophical
              Society},
    VOLUME = {145},
      YEAR = {2008},
    NUMBER = {3},
     PAGES = {623--642},
      ISSN = {0305-0041,1469-8064},
   MRCLASS = {13D40 (13H15)},
  MRNUMBER = {2464781},
MRREVIEWER = {Elena\ Guardo},
       DOI = {10.1017/S0305004108001540},
       URL = {https://doi.org/10.1017/S0305004108001540},
}

@article {TJP,
    AUTHOR = {Puthenpurakal, Tony J.},
     TITLE = {Symbolic powers of monomial ideals},
   JOURNAL = {Math. J. Okayama Univ.},
  FJOURNAL = {Mathematical Journal of Okayama University},
    VOLUME = {64},
      YEAR = {2022},
     PAGES = {187--190},
      ISSN = {0030-1566},
   MRCLASS = {13D40 (13H15)},
  MRNUMBER = {4340409},
MRREVIEWER = {Elena\ Guardo},
}

@book{bruns1998cohen,
  title={Cohen-macaulay rings},
  author={Bruns, Winfried and Herzog, H J{\"u}rgen},
  number={39},
  year={1998},
  publisher={Cambridge university press}
}

@article {MM,
    AUTHOR = {Mandal, Mousumi and Pradhan, Dipak Kumar},
     TITLE = {Symbolic defects of edge ideals of unicyclic graphs},
   JOURNAL = {J. Algebra Appl.},
  FJOURNAL = {Journal of Algebra and its Applications},
    VOLUME = {22},
      YEAR = {2023},
    NUMBER = {5},
     PAGES = {Paper No. 2350099, 30},
      ISSN = {0219-4988,1793-6829},
   MRCLASS = {13F20 (05E40 13A70)},
  MRNUMBER = {4556315},
MRREVIEWER = {Somayeh\ Bandari},
       DOI = {10.1142/S0219498823500998},
       URL = {https://doi.org/10.1142/S0219498823500998},
}

@article {BCMM,
    AUTHOR = {Chakraborty, Bidwan and Mandal, Mousumi},
     TITLE = {Invariants of the symbolic powers of edge ideals},
   JOURNAL = {J. Algebra Appl.},
  FJOURNAL = {Journal of Algebra and its Applications},
    VOLUME = {19},
      YEAR = {2020},
    NUMBER = {10},
     PAGES = {2050184, 19},
      ISSN = {0219-4988,1793-6829},
   MRCLASS = {13D02 (05C25 05C38 05E40 13A70 13C15 13D05)},
  MRNUMBER = {4140124},
MRREVIEWER = {Aryampilly\ V.\ Jayanthan},
       DOI = {10.1142/S0219498820501844},
       URL = {https://doi.org/10.1142/S0219498820501844},
}

@article{WhenEXT,
author = {Breaz, S and Schultz, PHILL},
title = {When EXT Cmmutes With Direct Sums},
journal = {Journal of Algebra and Its Applications},
volume = {11},
number = {05},
pages = {1250153},
year = {2012},
doi = {10.1142/S0219498812501538},

URL = { 
    
        https://doi.org/10.1142/S0219498812501538
    
    

},
eprint = { 
    
        https://doi.org/10.1142/S0219498812501538
    
    

}
,
    abstract = { We characterize the abelian groups G for which Ext(G, -) commutes with direct sums. }
}

@article {Breaz2013,
    AUTHOR = {Breaz, Simion},
     TITLE = {Modules M such that ${ {Ext}}_{{\bf \textit{R}}}^{\bf{1}}{({\bf \textit{M}},-)}$ Commutes with Direct Limits},
 journal={Algebras and Representation Theory},
year={2013},
month={Dec},
day={01},
volume={16},
number={6},
pages={1799-1808},

doi={10.1007/s10468-012-9382-y},
url={https://doi.org/10.1007/s10468-012-9382-y},
}

@Misc{M2,
          author = {Grayson, Daniel R. and Stillman, Michael E.},
          title = {Macaulay2, a software system for research in algebraic geometry},
          howpublished = {Available at \url{http://www2.macaulay2.com}}
        }

@book{matsumura1989commutative,
  title={Commutative ring theory},
  author={Matsumura, Hideyuki},
  number={8},
  year={1989},
  publisher={Cambridge university press}
}

\end{document}